\documentclass[11pt]{article}
\usepackage{amsfonts,amsmath,tikz}
\usepackage{epsfig}
\usepackage{latexsym}
\usepackage{color}
\usepackage{epsf,amssymb}
\usepackage{enumitem}

\usepackage{amsthm}
\usepackage{yfonts}
\usepackage{hyperref}
\usepackage{ifthen}

\usepackage{cite}

\usetikzlibrary{shapes,positioning}
\usetikzlibrary{calc}

\usepackage{float} 
\floatstyle{plaintop}

\newtheorem{theorem}{Theorem}
\newtheorem{lemma}[theorem]{Lemma}

\newtheorem{corollary}[theorem]{Corollary}
\newtheorem{proposition}[theorem]{Proposition}

\newcommand{\cind}{c_{\rm ind}}

\newcommand{\gammaenaL}{\gamma_1^{\mathrm L}}
\newcommand{\gammakL}{\gamma_k^{\mathrm L}}

\tikzset{My Style/.style={draw, circle, fill=black,scale=0.5}} 

\title{Structural aspects of $1$-limited domination in cubic graphs}

\author{
Goran Radi\'c \\
\small \it University of Maribor, FNM, Koro\v ska cesta 160, 2000, Maribor, Slovenia
\\
\small \tt radic.goran@gmail.com
\and
Gordana Radi\'{c} \\
\small \it University of Maribor, FEECS, Koro\v ska cesta 46, 2000, Maribor, Slovenia\\
\small \it Institute of Mathematics, Physics and Mechanics,  Jadranska ul.~19, 1000 Ljubljana, Slovenia \\
\small \tt gordana.radic@um.si
\and
 Aleksandra Tepeh \\
\small \it University of Maribor, FEECS, Koro\v ska cesta 46, 2000, Maribor, Slovenia \\
\small \it 
Institute of Mathematics, Physics and Mechanics,  Jadranska ul.~19, 1000 Ljubljana, Slovenia \\
\small \it University of Maribor, FNM, Koro\v ska cesta 160, 2000, Maribor, Slovenia\\
\small \tt aleksandra.tepeh@um.si  \\
}

  \date{}    

\begin{document}
\maketitle

\begin{abstract}
A set $D\subseteq V(G)$ is a $1$-limited dominating set of a graph $G$ if every vertex outside $D$ has a neighbor in $D$ and every vertex of $D$ has at most one neighbor outside $D$. The minimum cardinality among all $1$-limited dominating sets of $G$ is denoted by $\gammaenaL(G)$. 

For every cubic graph $G$ of order $n$, there exists a nonnegative integer $t$ such that $\gammaenaL(G)=\frac n2+t$. We establish structural properties of $1$-limited dominating sets in terms of $t$ and use them to study the induced cycles vertex number, that is, the maximum number of vertices inducing a $2$-regular subgraph. As applications, we determine the exact $1$-limited domination numbers of generalized Blanu\v{s}a snarks and flower snarks.
\end{abstract}

\noindent
{\bf Keywords:} cubic graphs, $1$-limited domination, $(1,2)$-domination, induced cycles vertex number, Blanu\v{s}a snarks,  Flower snarks

\medskip

\noindent
{\bf Mathematics Subject Classification:} 05C69

\section{Introduction and preliminaries}

Throughout the paper, $G$ denotes a finite simple graph with vertex set $V(G)$ and edge set $E(G)$. For a vertex $v\in V(G)$, let $N_G(v)$ denote its neighborhood, and for a set $S\subseteq V(G)$, let $\overline S=V(G)\setminus S$.

Domination is one of the most extensively studied concepts in graph theory, owing to both its theoretical significance and its applications in areas such as network design, facility location, and theoretical computer science. As the literature on domination and its many variants is vast, we refer the interested reader to several monographs for an introduction and further references (see, e.g., \cite {HHH1,HHH2,HHH3}).

Recall that a \emph{dominating set} of $G$ is a set $D\subseteq V(G)$ such that every vertex in $\overline{D}$ has a neighbor in $D$. In this paper, we consider a recently introduced refinement called \emph{$k$-limited domination}. For a positive integer $k$, a dominating set $D$ of $G$ is a \emph{$k$-limited dominating set} if $|N_G(v)\setminus D|\leq k$ for every $v\in D$. We refer to this additional requirement as the \emph{$k$-limited constraint}. The minimum cardinality among all $k$-limited dominating sets of $G$ is called the \emph{$k$-limited domination number} and is denoted by $\gammakL(G)$. Since the $k$-limited domination number is additive over connected components, the problem reduces naturally to connected graphs. 

The concept of $k$-limited domination was introduced in~\cite{podgorica1}, where, among other results, general bounds of $k$-limited domination number and connections with graph packings were established. These bounds were further refined in~\cite{G}, using connections with capacitated domination and limited packing to obtain improved lower and upper bounds. Further developments include the study of $k$-limited domination in Cartesian products~\cite{S} and Sierpi\'nski graphs~\cite{D}.

The $k$-limited domination model is related to several other domination concepts, in particular to $(1,s)$-domination. For a positive  integer $s\geq1$, a dominating set $D\subseteq V(G)$ of a graph $G$ with minimum degree at least $s$ is called a \emph{$(1,s)$-dominating set} if every vertex of $D$ has at least $s$ neighbors in $D$. The minimum cardinality among all such sets is denoted by $\gamma_{1,s}(G)$, see~\cite{fakhran}.

A key observation for the present study concerns regular graphs. If $G$ is $r$-regular and $1\leq k<r$, then, for every set $D\subseteq V(G)$ and every $v\in D$,
$|N_G(v)\setminus D|\leq k$
if and only if
$|N_G(v)\cap D|\geq r-k$.
Consequently,
$\gamma_k^{\mathrm L}(G)=\gamma_{1,r-k}(G)$.
In particular, if $G$ is cubic, then
\[
\gammaenaL(G)=\gamma_{1,2}(G).
\]
Thus, $1$-limited domination in cubic graphs can equivalently be viewed as $(1,2)$-domination.

The \emph{induced cycles vertex number} of a graph $G$, denoted by $\cind(G)$, is the maximum cardinality of a set $S\subseteq V(G)$ such that $G[S]$ is $2$-regular. Henning et al.~\cite{Henning} proved that
$\cind(G)\geq \frac n4+\frac12$
for every cubic graph $G$ of order $n$, and that
$\cind(G)>\frac{13n}{20}$
when $G$ is additionally claw-free. They also conjectured that
\begin{equation}
\label{eq:cind-conj}
c_{\mathrm{ind}}(G)\geq\frac n2
\end{equation}
for every cubic graph $G$.
On the other hand, as shown in~\cite{podgorica1,fakhran}, every cubic graph $G$ of order $n$ satisfies
\begin{equation}
\label{eq}
\gammaenaL(G)\geq\frac n2.
\end{equation}
Since every cubic graph has even order, there exists a nonnegative integer $t$ such that $$\gammaenaL(G)=\frac{n}{2}+t.$$
This suggests studying whether the parameter $t$, together with the structure of a minimum $1$-limited dominating set, can provide information about induced $2$-regular subgraphs.

Cubic graphs form one of the most prominent classes in graph theory. Their importance stems in part from the fact that several difficult problems and conjectures can be reduced to, or are naturally formulated for, cubic graphs. A classic example is the Four Color Theorem: Tait showed that it is equivalent to the statement that every bridgeless planar cubic graph admits a proper $3$-edge-coloring~\cite{Tait}. More generally, several important conjectures reduce to the cubic case, and for many of them the $3$-edge-colorable case is straightforward; see~\cite{snarki}. Consequently, the main difficulty often lies in cubic graphs that are not $3$-edge-colorable, which naturally leads to the study of snarks.

The precise definition of a snark varies slightly in the literature. In this paper, we call a connected bridgeless cubic graph of girth at least five that is not $3$-edge-colorable a \emph{snark}. The Petersen graph is the classical first example, while Isaacs' constructions from 1975 produced the first infinite families of snarks~\cite{Isaacs}. Many further families are now known; for historical background, we refer to~\cite{Bradley}. Despite their central role in problems concerning edge-colorability and related conjectures, snarks have only relatively recently been studied from the perspective of domination parameters.

In 2016, Zhao and Liu determined the connected domination and tree domination numbers of Goldberg and flower snarks \cite{Zhao}. Several variants of Roman domination on snarks were studied in \cite{maxim, Luiz}, while weakly convex and convex domination of flower snarks were considered in \cite{kratica}. More recently, Burdett, Haythorpe, and Newcombe determined the exact values of several further domination parameters for flower snarks \cite{Burdett}, while domination and independent domination in generalized Blanu\v{s}a and Loupekine snarks were studied in~\cite{amc}.

The study of domination on snarks was extended to $1$-limited domination in~\cite{GRAT-gold}. It was shown that determining whether a cubic graph has a $1$-limited dominating set of a given size is NP-complete even for $2$-connected planar cubic graphs, and the exact $1$-limited domination number of the Goldberg snarks was determined using a block-based analysis with local forcing and global compensation. We 
note that the same type of direct block-by-block analysis can be adapted to flower snarks; in the present paper, however, we develop general structural results for $1$-limited dominating sets in cubic graphs and use them to determine the exact values for generalized Blanu\v{s}a snarks and flower snarks.

The remainder of the paper is organized as follows. In Section 2, we describe the structure of cubic graphs $G$ of order $n$ with $\gammaenaL(G)=\frac n2+t$, where $t$ is a nonnegative integer, and discuss its relationship with induced $2$-regular subgraphs. In Section 3, we apply some of these results to determine the exact $1$-limited domination number of generalized Blanu\v{s}a snarks and flower snarks. We conclude the paper in Section 4 with a further discussion of the relationship between the $1$-limited domination number and the induced cycles vertex number.

%
\section{$1$-limited domination and induced cycles}
%

We begin by developing structural properties of $1$-limited dominating sets in cubic graphs. Throughout this section, we use the following notation. For a positive integer $k$, let $[k]=\{1,\ldots,k\}$. For a vertex $v\in V(G)$, its degree is denoted by $\deg_G(v)$. For a set $S\subseteq V(G)$, we denote by $G[S]$ the subgraph of $G$ induced by $S$, while $G[S,\overline S]$ denotes the bipartite spanning subgraph consisting of all edges with one endpoint in $S$ and the other in $\overline S$. The complete bipartite graph with partite sets of cardinalities $m$ and $n$ is denoted by $K_{m,n}$. For $n\geq2$, we call $K_{1,n}$ a \emph{star} and refer to its vertex of degree $n$ as its center. 

Let $D$ be a $1$-limited dominating set of a cubic graph $G$. Our first goal is to describe how the $1$-limited constraint is reflected in the bipartite spanning subgraph $G[D,\overline D]$.

To describe this relationship, for each $i\in\{0,1,2,3\}$, let $d_i$ denote the number of vertices of degree $i$ in $G[D,\overline{D}]$. That is,
$$
d_i=|\{v\in V(G):\deg_{G[D,\overline D]}(v)=i\}|.
$$
Since $D$ is a dominating set, every vertex of $\overline D$ has at least one neighbor in $D$, while the $1$-limited constraint implies that every vertex of $D$ has at most one neighbor in $\overline D$. Taken together, these observations imply that all $d_0$ isolated vertices of $G[D,\overline D]$ belong to $D$, whereas all its vertices of degree two or three belong to $\overline D$. 

\begin{theorem}\label{thm}
Let $D$ be a $1$-limited dominating set of a cubic graph $G$ of order $n$.
Let $t$ be the nonnegative integer such that
$|D|=\frac{n}{2}+t$. Then
\begin{enumerate}
\item[$\mathrm{(i)}$] $d_0+d_2+2d_3=2t$;
\item[$\mathrm{(ii)}$] $d_0$ and $d_2$ are even;
\item[$\mathrm{(iii)}$] $G[D,\overline D]$ is a disjoint union of $d_0$ isolated vertices, $d_2$ copies of $K_{1,2}$ and $d_3$ copies of $K_{1,3}$, while all its remaining components are isomorphic to $K_2$. The centers of all star components belong to $\overline D$.
\end{enumerate}
\end{theorem}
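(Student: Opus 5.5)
The plan is to count the edges of the bipartite graph $H=G[D,\overline D]$ in two ways and then read off the component structure. We already know that vertices of $D$ have degree $0$ or $1$ in $H$, and vertices of $\overline D$ have degree $1$, $2$ or $3$. Write $a$ for the number of degree-one vertices of $H$ lying in $D$, and $b$ for the number lying in $\overline D$, so that $d_1=a+b$. Then $|D|=d_0+a$ and $|\overline D|=b+d_2+d_3$.

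Counting the edges of $H$ from each side gives $a=b+2d_2+3d_3$. Adding the two expressions for the sides gives $n=d_0+a+b+d_2+d_3$. Substituting $b=a-2d_2-3d_3$ yields $n=d_0+2a-d_2-2d_3$. Hence
\[
2|D|=2d_0+2a=n+d_0+d_2+2d_3,
\]
and since $|D|=\frac n2+t$ this is exactly (i).

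For (iii) I will argue through the degrees in $H$. Every vertex of $D$ has degree at most one in $H$, so every neighbour in $H$ of a vertex $v\in\overline D$ is a leaf. Therefore the component of $v$ is precisely the star on $v$ and its $\deg_H(v)$ neighbours. If $\deg_H(v)=1$, this component is a $K_2$. If $\deg_H(v)$ is $2$ or $3$, it is a $K_{1,2}$ or a $K_{1,3}$ with its center in $\overline D$. Every vertex of $D$ is either isolated in $H$ or adjacent to some vertex of $\overline D$, so it lies in one of these components. This gives the decomposition, with exactly $d_2$ copies of $K_{1,2}$ and $d_3$ copies of $K_{1,3}$.

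For (ii) I need a parity argument. Since $G[D]$ must have even degree sum, I will look at the degrees there. A vertex of $D$ has degree $3$ in $G[D]$ if it is isolated in $H$, and degree $2$ otherwise. The degree sum of $G[D]$ is therefore $3d_0+2a$, which forces $d_0$ to be even. For $d_2$, I apply the same idea to $G[\overline D]$, where a vertex of $\overline D$ with $H$-degree $i$ has degree $3-i$. The degree sum is $2b+d_2$, so $d_2$ is even. Alternatively, (i) together with $d_0$ even gives the parity of $d_2$ directly.

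I expect no serious obstacle. The only point requiring care is keeping $a$ and $b$ separate, rather than working only with $d_1$, so that the edge count closes correctly.
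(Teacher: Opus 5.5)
Your proof is correct and follows essentially the same route as the paper: you double count the edges of $G[D,\overline D]$ for (i), use the degree sum of $G[D]$ for the parity of $d_0$, and use the fact that $D$-vertices have degree at most one in $G[D,\overline D]$ to force every nontrivial component to be a star centered in $\overline D$. The differences are cosmetic: you split $d_1$ into $a$ and $b$, argue (iii) via leaves rather than via a path between two $\overline D$-vertices, and add a direct parity argument for $d_2$ through the degree sum of $G[\overline D]$, which the paper instead derives from (i).
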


\begin{proof}
Let $D\subseteq V(G)$ be a $1$-limited dominating set with
$|D|=\frac n2+t$ and $|\overline D|=\frac n2-t$. Since exactly $d_0$ vertices of $D$ are isolated in $G[D,\overline D]$, while every other vertex of $D$ is of degree one, it follows that
$$
|E(G[D,\overline D])|
=|D|-d_0
=\tfrac n2+t-d_0.
$$
On the other hand, every vertex of $\overline{D}$ has degree at least one in $G[D,\overline D]$. More precisely, each of the $d_2$ vertices of degree two contributes one additional edge, while each of the $d_3$ vertices of degree three contributes two additional edges. Hence,
$$
|E(G[D,\overline D])|
=|\overline D|+d_2+2d_3
=\tfrac n2-t+d_2+2d_3.
$$
Equating the two expressions for $|E(G[D,\overline{D}])|$ gives $d_0+d_2+2d_3=2t$, which proves~(i). Furthermore,
$$
\sum_{v\in D}\deg_{G[D]}(v)
=3|D|-|E(G[D,\overline D])|
=3|D|-(|D|-d_0)
=2|D|+d_0.
$$
Consequently, $d_0$ must be even. Moreover, as $d_2=2t-2d_3-d_0$ by (i), it yields that $d_2$ is even as well. 

It remains to prove the decomposition stated in~(iii). First observe that, since $D$ is a dominating set, every isolated vertex of $G[D,\overline D]$ belongs to $D$, and so every nontrivial component $C$ of $G[D,\overline D]$ has at least one vertex of $\overline D$.  We claim that it contains exactly one such vertex. Otherwise, if $C$ contained two vertices of $\overline D$, then a path between them in $C$ would contain an internal vertex of $D$ having degree at least two in $G[D,\overline D]$, contrary to the $1$-limited constraint.

Thus, every nontrivial component of $G[D,\overline D]$ contains exactly one vertex of $\overline D$. If this vertex has degree one, then the component is isomorphic to $K_2$. If it has degree two or three, then the component is isomorphic to $K_{1,2}$ or $K_{1,3}$, respectively, with center in $\overline D$. Since there are exactly $d_2$ vertices of degree two and $d_3$ vertices of degree three in $G[D,\overline D]$, this yields $d_2$ copies of $K_{1,2}$ and $d_3$ copies of $K_{1,3}$, while all remaining nontrivial components are isomorphic to $K_2$. Together with the $d_0$ isolated vertices, this proves~(iii).
\end{proof}

The decomposition given in Theorem~\ref{thm} can be translated directly into properties of the subgraphs induced by $D$ and $\overline D$.

\begin{corollary}\label{cor:0}
Under the assumptions of Theorem \ref{thm}, the following statements hold.
\begin{enumerate}
\item[$\mathrm{(i)}$] If $d_0=0$, then $G[D]$ is $2$-regular and $c_{\mathrm{ind}}(G)\geq\tfrac n2+t$.
\item[$\mathrm{(ii)}$] If $d_2=d_3=0$, then $G[\overline D]$ is $2$-regular and $c_{\mathrm{ind}}(G)\geq\tfrac n2-t$.
\end{enumerate}
\end{corollary}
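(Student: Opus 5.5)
Both parts follow from the decomposition in Theorem~\ref{thm}(iii), by reading off the degree of each vertex inside $G[D]$ or $G[\overline D]$ from its degree in $G[D,\overline D]$. Since $G$ is cubic, every vertex $v$ satisfies $\deg_{G[D]}(v)=3-\deg_{G[D,\overline D]}(v)$ if $v\in D$, and $\deg_{G[\overline D]}(v)=3-\deg_{G[D,\overline D]}(v)$ if $v\in\overline D$. So it suffices to show that, under each hypothesis, every vertex of the relevant set has degree exactly one in $G[D,\overline D]$.

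For (i), take $v\in D$. By the $1$-limited constraint, $\deg_{G[D,\overline D]}(v)\le 1$. The vertices of $D$ with degree $0$ are exactly the $d_0$ isolated vertices. So if $d_0=0$, every vertex of $D$ has degree exactly one in $G[D,\overline D]$, and hence degree two in $G[D]$. Thus $G[D]$ is $2$-regular. Since $D$ is nonempty (it dominates $G$), $D$ is an admissible set in the definition of $c_{\rm ind}$, which gives $c_{\rm ind}(G)\ge |D|=\frac n2+t$.

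For (ii), take $u\in\overline D$. Since $D$ is dominating, $\deg_{G[D,\overline D]}(u)\ge1$. The vertices of degree two or three in $G[D,\overline D]$ are exactly the centers of the $K_{1,2}$ and $K_{1,3}$ components, all lying in $\overline D$, and there are $d_2+d_3$ of them. If $d_2=d_3=0$, every $u\in\overline D$ therefore has degree exactly one in $G[D,\overline D]$, hence degree two in $G[\overline D]$, so $G[\overline D]$ is $2$-regular. This gives $c_{\rm ind}(G)\ge|\overline D|=\frac n2-t$.

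The only delicate point is degeneracy in (ii): $\overline D$ could be empty, which happens when $t=\frac n2$. Then $d_0=n$ by Theorem~\ref{thm}(i), and the bound $\frac n2-t=0$ holds trivially, so there is no real obstacle.
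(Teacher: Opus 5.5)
Your proof is correct and is essentially the paper's argument: under each hypothesis every vertex of the relevant set has degree exactly one in $G[D,\overline D]$, hence degree two in $G[D]$ or $G[\overline D]$, giving an induced $2$-regular subgraph of order $|D|$ or $|\overline D|$. Your remark on the degenerate case $\overline D=\emptyset$ (possible since $D$ need not be minimum) is a harmless addition that the paper leaves implicit.
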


\begin{proof}
If $d_0=0$, then every vertex of $D$ has degree one in $G[D,\overline D]$,  and hence degree two in $G[D]$. Thus, $G[D]$ is an induced $2$-regular subgraph of order $|D|=\frac n2+t$, which yields $c_{\mathrm{ind}}(G)\geq\tfrac n2+t$.

Similarly, from $d_2=d_3=0$, it follows that every vertex of $\overline D$ has degree one in $G[D,\overline D]$ and hence degree two in $G[\overline D]$. Therefore, $G[\overline D]$ is an induced $2$-regular subgraph of order $|\overline D|=\frac n2-t$ giving that $c_{\mathrm{ind}}(G)\geq\tfrac n2-t$.
\end{proof}

We next relate Corollary~\ref{cor:0} to the known extremal case. Recall that a \emph{matching} in a graph is a set of pairwise nonincident edges, and that a matching is \emph{perfect} if every vertex of the graph is incident with exactly one edge of the matching.

Fakhran et al.~\cite{fakhran} characterized the connected cubic graphs
satisfying $\gamma_{1,2}(G)=\frac n2$. They showed that these are precisely
the graphs consisting of two vertex-disjoint $2$-regular graphs of equal
order joined by a perfect matching. Since $\gammaenaL(G)=\gamma_{1,2}(G)$ for cubic graphs, their characterization can be stated in our terminology as follows.

\begin{proposition}
\label{n/2}
\cite{fakhran}
Let $G$ be a connected cubic graph of order $n$. Then
$\gammaenaL(G)=\frac n2$
if and only if there exists a set $D\subseteq V(G)$ such that
$E(G[D,\overline D])$
is a perfect matching of $G$.
\end{proposition}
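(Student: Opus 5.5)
Both directions follow directly from Theorem~\ref{thm} with $t=0$, together with the lower bound~\eqref{eq}.

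\emph{Forward direction.} Assume $\gammaenaL(G)=\frac n2$ and let $D$ be a minimum $1$-limited dominating set, so $|D|=\frac n2$ and $t=0$. Part (i) of Theorem~\ref{thm} gives $d_0+d_2+2d_3=0$. Since all terms are nonnegative, $d_0=d_2=d_3=0$. Part (iii) then says that every component of $G[D,\overline D]$ is a copy of $K_2$, and that $G[D,\overline D]$ has no isolated vertices. Hence $E(G[D,\overline D])$ is a set of pairwise disjoint edges covering every vertex, that is, a perfect matching of $G$.

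\emph{Backward direction.} Assume $D\subseteq V(G)$ is a set such that $E(G[D,\overline D])$ is a perfect matching. We verify the following.
\begin{itemize}
\item Every vertex of $\overline D$ is matched to a vertex of $D$, so $D$ is dominating.
\item Every vertex of $D$ has exactly one neighbour in $\overline D$, so the $1$-limited constraint holds.
\item The perfect matching pairs each vertex of $D$ with a distinct vertex of $\overline D$, so $|D|=|\overline D|=\frac n2$. (If $D=\emptyset$, the matching would contain no edges and could not be perfect, since $n>0$.)
\end{itemize}
Therefore $\gammaenaL(G)\le \frac n2$, and combined with~\eqref{eq} this gives equality.

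The only point needing care is that $D$ and $\overline D$ are both nonempty. This is guaranteed because the perfect matching has edges, each joining a vertex of $D$ to a vertex of $\overline D$. Connectivity of $G$ is not actually used.

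By Corollary~\ref{cor:0}, both $G[D]$ and $G[\overline D]$ are then $2$-regular, which matches the description in~\cite{fakhran}.
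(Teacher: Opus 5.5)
Your proof is correct and essentially matches the paper's treatment. The paper cites this result from Fakhran et al.\ without reproving it, but in the Conclusion it derives the forward direction exactly as you do (for $t=0$, Theorem~\ref{thm}(i) forces $d_0=d_2=d_3=0$, so $G[D,\overline D]$ is a perfect matching), and your converse is the direct verification the paper relies on for the generalized Blanu\v{s}a snarks; your remark that connectivity is not needed is also right.
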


It is therefore clear that a graph $G$ with $\gammaenaL(G)=\frac n2$, i.e., $t=0$, satisfies
$\cind(G)\geq\frac n2$.
This also follows immediately from Theorem~\ref{thm} and Corollary~\ref{cor:0}.

For $t>0$, the vertices counted by $d_0$ are precisely the obstruction
to $G[D]$ being $2$-regular: they have degree three in $G[D]$, whereas
all other vertices of $D$ have degree two. In some configurations this
obstruction can be eliminated by deleting a single vertex. The next
result characterizes exactly when this is possible.

\begin{proposition}\label{lem:-one}
Let $D$ be a $1$-limited dominating set of a cubic graph $G$. Then there exists a vertex $v\in D$ such that $G[D\setminus\{v\}]$ is $2$-regular if and only if one of the following statements holds:
\begin{enumerate}
\item[$\mathrm{(i)}$] $d_0=2$, and the two isolated vertices of $G[D,\overline D]$ have a common neighbor in $G[D]$;
\item[$\mathrm{(ii)}$] $d_0=4$, and one of the four isolated vertices of $G[D,\overline D]$ is adjacent in $G[D]$ to the other three.
\end{enumerate}
\end{proposition}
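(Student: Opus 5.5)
The plan is a degree count in $G[D]$, using the structure from Theorem~\ref{thm}. Every vertex of $D$ has degree two in $G[D]$ except the $d_0$ isolated vertices of $G[D,\overline D]$, which have degree three in $G[D]$. Fix $v\in D$ and let $H=G[D\setminus\{v\}]$. Deleting $v$ lowers by one the degree of each of its $\deg_{G[D]}(v)$ neighbours in $D$ and leaves every other degree unchanged. So $H$ is $2$-regular exactly when two conditions hold. First, every vertex of degree three in $G[D]$ other than $v$ is adjacent to $v$. Second, no neighbour of $v$ in $D$ has degree two in $G[D]$, since it would drop to degree one. Thus all neighbours of $v$ in $D$ are isolated vertices of $G[D,\overline D]$, and every such isolated vertex other than $v$ is a neighbour of $v$.

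For the forward direction, let $I$ be the set of the $d_0$ isolated vertices, and split into cases according to whether $v\in I$.

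\emph{Case $v\in I$.} Then $\deg_{G[D]}(v)=3$, so all three neighbours of $v$ lie in $I$. Conversely every vertex of $I\setminus\{v\}$ is adjacent to $v$, so $I\setminus\{v\}=N_G(v)$ and $d_0=4$. This is (ii).

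\emph{Case $v\notin I$.} Then $\deg_{G[D]}(v)=2$, so both $D$-neighbours of $v$ lie in $I$, and $I$ is exactly this set of two vertices. Hence $d_0=2$ and the two vertices of $I$ share the neighbour $v$ in $G[D]$. This is (i).

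In this case one might worry that $v$ has no neighbours in $D$ at all. That cannot happen, because every vertex of $D$ has degree at least two in $G[D]$ by the $1$-limited constraint. The case $d_0=0$ is excluded automatically: removing a vertex of degree two would create vertices of degree one.

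The converse runs the same computation backwards. In (i), take $v$ to be the common neighbour of the two isolated vertices $x,y$. We need $v\notin I$, i.e.\ that $v$ has degree two in $G[D]$. If instead $v$ had degree three, then $v\in I$ with $d_0=2$; its third neighbour in $D$ would then have degree two in $G[D]$ and drop to degree one after deleting $v$.

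I expect this to be the main obstacle: the statement of (i) does not explicitly exclude a common neighbour that is itself isolated. It is resolved by noting that the isolated vertices are exactly $x$ and $y$, so the common neighbour $v$ is not isolated. Hence $v$ has degree two in $G[D]$, its two $D$-neighbours are exactly $x$ and $y$, and deleting $v$ lowers both of them to degree two while all other degrees stay at two.

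In (ii), take $v$ to be the vertex adjacent to the other three. Since $v$ has degree three in $G$, those three are its entire neighbourhood. Deleting $v$ lowers them from degree three to two and affects no other vertex. So $G[D\setminus\{v\}]$ is $2$-regular in both cases, which completes the proof.
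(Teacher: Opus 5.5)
Your proof is correct and follows essentially the same route as the paper: it uses the same degree-three/degree-two dichotomy in $G[D]$, the same case split on whether $v$ is isolated in $G[D,\overline D]$, and the same converse computation. The ``obstacle'' you flag in (i) is not a real difficulty, and the paper dispatches it in one line: a common neighbour of the two isolated vertices is distinct from both, so it is not isolated when $d_0=2$, and your hypothetical detour about a degree-three common neighbour can simply be dropped.
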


\begin{proof}
Let $D\subseteq V(G)$ be a $1$-limited dominating set of $G$ and let $S\subseteq V(G)$ be the set of isolated vertices of $G[D,\overline D]$, so that $|S|=d_0$. Recall that the vertices of $S$ have degree three in $G[D]$, whereas all other vertices of $D$ have degree two.

\medskip
Suppose first that there exists a vertex $v\in D$ such that $G[D\setminus\{v\}]$ is $2$-regular. Since every neighbor of $v$ in $G[D]$ loses one incident edge when $v$ is removed, each of these neighbors must belong to $S$. Consequently, $N_{G[D]}(v)\subseteq S$. Moreover, every vertex of $S\setminus\{v\}$ must be adjacent to $v$, as otherwise its degree would remain three in $G[D\setminus\{v\}]$.

If $v\notin S$, then $v$ has degree two in $G[D]$, and its two neighbors are precisely the vertices of $S$. Thus, $d_0=2$, and the two isolated vertices of $G[D,\overline D]$ have the common neighbor $v$, giving~(i).

Next, if $v\in S$, then $v$ has degree three in $G[D]$. Its three neighbors are precisely the remaining vertices of $S$, and therefore $d_0=4$. Hence, $v$ is adjacent in $G[D]$ to the other three isolated vertices of $G[D,\overline D]$, giving~(ii).

\medskip
Conversely, suppose first that~(i) holds, and let $v$ be a common neighbor of the two isolated vertices $u_1$ and $u_2$. Since $v$ is distinct from $u_1$ and $u_2$, it is not isolated in $G[D,\overline{D}]$, and therefore has degree two in $G[D]$. Hence, in $G[D\setminus\{v\}]$, the degrees of $u_1$ and $u_2$ are reduced from three to two, while all other remaining vertices retain degree two. Thus, $G[D\setminus\{v\}]$ is $2$-regular.

Finally, suppose that~(ii) holds, and let $v$ be the isolated vertex adjacent in $G[D]$ to the other three isolated vertices. Again, in $G[D\setminus\{v\}]$, the degrees of these three vertices are reduced from three to two, while all other remaining vertices retain degree two. Hence, $G[D\setminus\{v\}]$ is again $2$-regular.
\end{proof}

The configurations in Proposition~\ref{lem:-one} immediately yield the
conjectured lower bound for the induced cycles vertex number.

\begin{corollary}\label{cor}
Let $G$ be a cubic graph of order $n$, and let $D$ be a $1$-limited dominating set of $G$. If either condition $\mathrm{(i)}$ or $\mathrm{(ii)}$ of Proposition~\ref{lem:-one} holds, then $c_{\mathrm{ind}}(G)\geq\frac n2$.
\end{corollary}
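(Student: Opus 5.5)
Write $|D|=\frac n2+t$ as in Theorem~\ref{thm}. By Proposition~\ref{lem:-one}, either condition (i) or (ii) yields a vertex $v\in D$ with $G[D\setminus\{v\}]$ $2$-regular. Hence
$$c_{\mathrm{ind}}(G)\ge |D|-1=\tfrac n2+t-1.$$
This settles the case $t\ge 1$. It remains to treat $t=0$, where this bound only gives $\frac n2-1$.

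The plan is to show that $t=0$ is incompatible with both conditions. By Theorem~\ref{thm}(i), $d_0+d_2+2d_3=2t$, so $d_0\le 2t$.
\begin{itemize}
\item Condition (i) needs $d_0=2$, which forces $t\ge 1$.
\item Condition (ii) needs $d_0=4$, which forces $t\ge 2$.
\end{itemize}
So whenever (i) or (ii) holds we have $t\ge1$. The bound above then gives $c_{\mathrm{ind}}(G)\ge \frac n2+t-1\ge\frac n2$. In case (ii) the conclusion is even $c_{\mathrm{ind}}(G)\ge\frac n2+1$.

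There is no real obstacle here. The only point needing care is to make the hypothesis $d_0\in\{2,4\}$ force $t\ge1$ through the counting identity, rather than treating $t$ as arbitrary. Beyond that, the argument is just Proposition~\ref{lem:-one} combined with the definition of $c_{\mathrm{ind}}$.
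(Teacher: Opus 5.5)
Your proof is correct and follows the paper's argument: Proposition~\ref{lem:-one} gives $c_{\mathrm{ind}}(G)\ge |D|-1=\frac n2+t-1$, and $d_0\in\{2,4\}$ together with the identity $d_0+d_2+2d_3=2t$ from Theorem~\ref{thm}(i) forces $t\ge1$. Your extra remark is also valid: in case (ii), $d_0=4$ gives $t\ge2$ and hence $c_{\mathrm{ind}}(G)\ge\frac n2+1$.
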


\begin{proof}
Let $D\subseteq V(G)$ be a $1$-limited dominating set. By~\eqref{eq}, $|D|\geq\gammaenaL(G)\geq\frac n2$.
Hence we may write
$|D|=\frac n2+t$
for some nonnegative integer $t$.  Either of the conditions of Proposition \ref{lem:-one} implies that $d_0>0$, and hence Theorem \ref{thm}(i) gives $t\ge 1$. Moreover, Proposition~\ref{lem:-one} provides a vertex $v\in D$ such that $G[D\setminus\{v\}]$ is $2$-regular. Therefore, $c_{\mathrm{ind}}(G)\geq|D|-1
=\frac n2+t-1
\geq\frac n2
$.
\end{proof}

After applying these results to snarks in the next section, we return to them in Section~\ref{conclusion}, where we discuss their possible relevance to the conjectured bound for the induced cycles vertex number.

\section{Applications to snarks}

The structural results obtained in the previous section provide a convenient framework for studying $1$-limited domination in specific classes of cubic graphs. We now apply them to two classical families of snarks, generalized Blanu\v{s}a snarks and flower snarks.

\subsection{Blanu\v{s}a snarks}

The \emph{generalized Blanu\v{s}a snarks} are constructed from three basic blocks, denoted by $B$, $A_1$, and $A_2$, as shown in Figure~\ref{blan-bas}. The block $B$ has eight vertices, and  in the representation shown in the figure, its four corner vertices are labeled $a,b,c,d$. On the other hand, each of the blocks $A_1$ and $A_2$ has ten vertices, and its four corner vertices are labeled $x,y,z,w$.

Let $k\ge1$ and let $r\in\{1,2\}$. The generalized Blanu\v{s}a snark of type $r$, denoted by $B_k^r$, is constructed from $k$ copies of the block $B$ and one copy of the special block $A_r$, where $A_r=A_1$ if $r=1$, and $A_r=A_2$ if $r=2$. We denote the copies of $B$ by $B^1,B^2,\ldots,B^k$. For each $i\in[k]$, the corner vertices of $B^i$ corresponding to $a,b,c,d$ in $B$ are denoted by $a_i,b_i,c_i,d_i$, respectively, while the corner vertices of the single copy of $A_r$ retain without subscripts.

The blocks are arranged in the cyclic order $B^1,B^2,\ldots,B^k,A_r$. For every $i\in[k-1]$ we join $B^i$ to $B^{i+1}$ by connecting $b_i$ to $c_{i+1}$ and $d_i$ to $a_{i+1}$. We then join $B^k$ to $A_r$ by connecting $b_k$ to $z$ and $d_k$ to $x$, and close the cyclic sequence by connecting $y$ to $c_1$ and $w$ to $a_1$. The resulting graph $B_k^r$ is cubic and has $|V(B_k^r)|=8k+10$ vertices. 

\begin{figure}[!ht]
    \centering
    \begin{tikzpicture}[scale=0.9]

\tikzset{
    BlackVertex/.style={
        draw,
        circle,
        fill=black,
        minimum size=5pt,
        inner sep=0pt
    },
    WhiteVertex/.style={
        draw,
        circle,
        fill=white,
        minimum size=5pt,
        inner sep=0pt
    },
    GraphEdge/.style={
        black,
        thick
    },
    MatchingEdge/.style={
        red,
        very thick
    }
}

\begin{scope}[xshift=0cm]

\coordinate (Ba) at (0,3);
\coordinate (Bb) at (1,3);
\coordinate (Bc) at (2,3);
\coordinate (Bd) at (0,2);
\coordinate (Be) at (2,2);
\coordinate (Bf) at (0,1);
\coordinate (Bg) at (1,1);
\coordinate (Bh) at (2,1);

\draw[GraphEdge] (Ba) -- (Bb);
\draw[GraphEdge] (Bd) -- (Be);
\draw[GraphEdge] (Bg) -- (Bh);
\draw[GraphEdge] (Bd) -- (Bf);
\draw[GraphEdge] (Bb) -- (Bg);
\draw[GraphEdge] (Bc) -- (Be);
\draw[MatchingEdge] (Bb) -- (Bc);
\draw[MatchingEdge] (Bf) -- (Bg);
\draw[MatchingEdge] (Ba) -- (Bd);
\draw[MatchingEdge] (Be) -- (Bh);

\node[BlackVertex] at (Ba) {};
\node[BlackVertex] at (Bb) {};
\node[WhiteVertex] at (Bc) {};
\node[WhiteVertex] at (Bd) {};
\node[WhiteVertex] at (Be) {};
\node[WhiteVertex] at (Bf) {};
\node[BlackVertex] at (Bg) {};
\node[BlackVertex] at (Bh) {};

\node[above] at (1,3.25) {$B$};

\node[left] at (Ba) {$a$};
\node[right] at (Bc) {$b$};
\node[left] at (Bf) {$c$};
\node[right] at (Bh) {$d$};

\end{scope}

\begin{scope}[xshift=4.5cm]

\coordinate (Aonep) at (0,3);
\coordinate (Aoneq) at (0,1);
\coordinate (Aonea) at (1,3);
\coordinate (Aoneb) at (2,3);
\coordinate (Aonec) at (3,3);
\coordinate (Aoned) at (1,2);
\coordinate (Aonee) at (3,2);
\coordinate (Aonef) at (1,1);
\coordinate (Aoneg) at (2,1);
\coordinate (Aoneh) at (3,1);

\draw[GraphEdge] (Aonep) -- (Aonea);
\draw[GraphEdge] (Aonef) -- (Aoneq);
\draw[GraphEdge] (Aonea) -- (Aoneb);
\draw[GraphEdge] (Aoned) -- (Aonee);
\draw[GraphEdge] (Aoneg) -- (Aoneh);
\draw[GraphEdge] (Aoned) -- (Aonef);
\draw[GraphEdge] (Aoneb) -- (Aoneg);
\draw[GraphEdge] (Aonec) -- (Aonee);

\draw[MatchingEdge] (Aonep) -- (Aoneq);
\draw[MatchingEdge] (Aoneb) -- (Aonec);
\draw[MatchingEdge] (Aonef) -- (Aoneg);
\draw[MatchingEdge] (Aonea) -- (Aoned);
\draw[MatchingEdge] (Aonee) -- (Aoneh);

\node[BlackVertex] at (Aonep) {};
\node[WhiteVertex] at (Aoneq) {};
\node[BlackVertex] at (Aonea) {};
\node[BlackVertex] at (Aoneb) {};
\node[WhiteVertex] at (Aonec) {};
\node[WhiteVertex] at (Aoned) {};
\node[WhiteVertex] at (Aonee) {};
\node[WhiteVertex] at (Aonef) {};
\node[BlackVertex] at (Aoneg) {};
\node[BlackVertex] at (Aoneh) {};

\node[above] at (1.5,3.25) {$A_1$};

\node[left] at (Aonep) {$x$};
\node[left] at (Aoneq) {$z$};
\node[right] at (Aonec) {$y$};
\node[right] at (Aoneh) {$w$};

\end{scope}

\begin{scope}[xshift=9cm]

\coordinate (Atwoa) at (1,3);
\coordinate (Atwob) at (2,3);
\coordinate (Atwoc) at (3,3);
\coordinate (Atwod) at (1,2);
\coordinate (Atwoe) at (3,2);
\coordinate (Atwof) at (1,1);
\coordinate (Atwog) at (2,1);
\coordinate (Atwoh) at (3,1);
\coordinate (Atwop) at (2,2.5);
\coordinate (Atwoq) at (2.5,2);

\coordinate (Atwolefta) at (0.45,1.75);
\coordinate (Atwoleftb) at (0.45,2.05);
\coordinate (Atworightc) at (3.55,1.75);
\coordinate (Atworightd) at (3.55,2.05);


\draw[GraphEdge] (Atwoa) -- (Atwob);
\draw[GraphEdge] (Atwod) -- (Atwoe);
\draw[GraphEdge] (Atwog) -- (Atwoh);
\draw[GraphEdge] (Atwod) -- (Atwof);
\draw[GraphEdge] (Atwob) -- (Atwog);
\draw[GraphEdge] (Atwoc) -- (Atwoe);

\draw[MatchingEdge] (Atwob) -- (Atwoc);
\draw[MatchingEdge] (Atwof) -- (Atwog);
\draw[MatchingEdge] (Atwoa) -- (Atwod);
\draw[MatchingEdge] (Atwoe) -- (Atwoh);
\draw[MatchingEdge] (Atwop) -- (Atwoq);

\node[BlackVertex] at (Atwoa) {};
\node[BlackVertex] at (Atwob) {};
\node[WhiteVertex] at (Atwoc) {};
\node[WhiteVertex] at (Atwod) {};
\node[WhiteVertex] at (Atwoe) {};
\node[WhiteVertex] at (Atwof) {};
\node[BlackVertex] at (Atwog) {};
\node[BlackVertex] at (Atwoh) {};
\node[BlackVertex] at (Atwop) {};
\node[WhiteVertex] at (Atwoq) {};

\node[above] at (2,3.25) {$A_2$};

\node[left] at (Atwoa) {$x$};
\node[left] at (Atwof) {$z$};
\node[right] at (Atwoc) {$y$};
\node[right] at (Atwoh) {$w$};

\end{scope}

\end{tikzpicture}
    \caption{The basic blocks used in the construction of generalized Blanu\v{s}a snarks. Black vertices belong to $D$, white vertices belong to $\overline{D}$, and the red edges join vertices from the two sets.}
    \label{blan-bas}
\end{figure}

\begin{proposition}\label{thm:blanusa}
Let $k\ge1$ and let $r\in\{1,2\}$. Then
$$
   \gammaenaL(B_k^r)=4k+5.
$$
\end{proposition}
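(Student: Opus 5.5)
The plan is to match the lower bound~\eqref{eq} with an explicit construction. Since $|V(B_k^r)|=8k+10$, the bound~\eqref{eq} gives $\gammaenaL(B_k^r)\ge \frac n2=4k+5$ immediately. It therefore suffices to exhibit a $1$-limited dominating set of size $4k+5$. Equivalently, by Proposition~\ref{n/2}, we want a set $D$ such that $E(G[D,\overline D])$ is a perfect matching of $B_k^r$. In the language of Theorem~\ref{thm} this is the case $t=0$, $d_0=d_2=d_3=0$.

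\textbf{Construction.} Take $D$ to be the set of black vertices in Figure~\ref{blan-bas}, placed in every copy $B^1,\dots,B^k$ and in the single copy of $A_r$. Each copy of $B$ contributes $4$ black vertices and $A_r$ contributes $5$, so $|D|=4k+5$.

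\textbf{Checking the matching condition.} I would show that every vertex has exactly one neighbour of the opposite colour, treating internal and connecting edges separately.

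Inside each block, the red edges join a black vertex to a white one and cover all vertices of the block. These are $4$ red edges in $B$ and $5$ in $A_r$. Every other edge drawn inside a block, checked edge by edge from the figure, joins two vertices of the same colour.

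The connecting edges between blocks must also be monochromatic. In $B$ the corners satisfy $a,d\in D$ and $b,c\in\overline D$. The edges $b_i c_{i+1}$ therefore join two white vertices, and the edges $d_i a_{i+1}$ join two black vertices. In $A_r$ the corners satisfy $x,w\in D$ and $y,z\in\overline D$. Hence the edges $b_kz$ and $yc_1$ are white--white, and $d_kx$ and $wa_1$ are black--black.

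Consequently $E(G[D,\overline D])$ consists exactly of the red edges, which form a perfect matching. Every vertex of $\overline D$ then has exactly one neighbour in $D$, so $D$ is dominating. Every vertex of $D$ has exactly one neighbour outside $D$, so the $1$-limited constraint holds. This yields $\gammaenaL(B_k^r)\le 4k+5$.

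\textbf{Main obstacle.} I expect the hardest step to be the bookkeeping for the special block $A_2$. There the two extra vertices $p\in D$ and $q\in\overline D$ are joined by a red edge. Their remaining edges are only indicated by stubs, so one must pin down exactly which edges of the underlying $B$-like frame they subdivide. Each subdividing edge then has to be shown to run between vertices of the same colour as the endpoint it attaches to. I would resolve this by writing out the neighbourhoods of all ten vertices of $A_2$ explicitly, following the standard Blanu\v{s}a construction, and confirming that $p$ has two black neighbours besides $q$ and that $q$ has two white neighbours besides $p$. The same explicit neighbourhood check handles $A_1$, where the red edge $xz$ plays the analogous role.
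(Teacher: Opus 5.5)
Your proposal is correct and follows essentially the paper's argument: the black vertices of Figure~\ref{blan-bas} form a set $D$ of size $4k+5$ whose cut edges are exactly the red perfect matching, which meets the lower bound $\frac n2$. The only cosmetic difference is that the paper concludes via Proposition~\ref{n/2}, whereas you check the $1$-limited domination property directly and cite~\eqref{eq}. Your reading of $A_2$ is also right: $p$ subdivides the black--black vertical edge through it, and $q$ subdivides the white--white horizontal edge through it.
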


\begin{proof}
Let $D$ be the set of black vertices in the copies of the blocks $B_i$, $i\in[k]$, and $A_r$ shown in Figure~\ref{blan-bas}, and let $\overline D$ be the set of white vertices. Furthermore, recall that $E(B_k^r[D,\overline{D}])$ is the set of edges of $B_k^r$ with one endvertex in $D$ and the other in $\overline D$.

Note that within each block, the red edges have one endvertex in $D$ and the other in $\overline D$, whereas every black edge has both endvertices in the same set, see Figure~\ref{blan-bas}. Moreover, joining consecutive blocks introduces no additional edge between $D$ and $\overline{D}$, since both endvertices of such edges belong to the same color set $D$ or $\overline{D}$. Hence, $E(B_k^r[D,\overline{D}])$ consists precisely of the red edges shown in the copies of $B_i$ and $A_r$.

Every vertex of $B_k^r$ is incident with exactly one red edge, and therefore $B_k^r[D,\overline D]$ is a disjoint union of $4k+5$ copies of $K_2$. Since $B_k^r$ is cubic and has $8k+10$ vertices, Corollary~\ref{n/2} now yields
$$
\gammaenaL(B_k^r)
 =\frac{|V(B_k^r)|}{2}
 =\frac{8k+10}{2}
 =4k+5.
$$
\end{proof}

\subsection{Flower snarks}

A \textit{flower snark} $J_k$ is a cubic graph constructed as follows. Begin with $k \geq 3$ copies of the star graph $K_{1,3}$, also known as a \textit{claw}. In the $i$th copy, let $a_i$ denote the central vertex and $b_i$, $c_i$, $d_i$ its three leaves. The $b$-vertices are joined to form the $k$-cycle $b_1b_2\dots b_kb_1$, while the $c$- and $d$-vertices are joined to create the $2k$-cycle $c_1c_2\dots c_kd_1d_2\dots d_kc_1$.

Following the terminology used in the literature, in particular in~\cite{Burdett}, we refer to the graphs obtained by this construction as flower snarks $J_k$ for all $k\geq3$. With the definition of a snark adopted in this paper, the members of this family that are snarks are precisely those with odd $k\geq5$.

Figure~\ref{J7} shows two representations of $J_7$. The circular representation on the left is more common, whereas the linear representation on the right is more convenient for our proofs. Following the notation of \cite{Burdett}, we denote the $i$th claw by $J^i$ and arrange its vertices in the order $b_i,a_i,c_i,d_i$ from top to bottom. In the linear representation, consecutive claws are joined by $b_ib_{i+1}$, $c_ic_{i+1}$, and $d_id_{i+1}$ for $i\in[k-1]$, while the cyclic sequence is closed by the edges $b_kb_1$, $c_kd_1$, and $d_kc_1$. We refer to the two crossed edges $c_kd_1$ and $d_kc_1$ collectively as the \emph{twist}.

\begin{figure}[ht!]
\begin{center}

\begin{tikzpicture}[scale=1.5]

\tikzset{My Style/.style={draw, circle, fill=black, scale=0.5}}

\foreach \i in {1,...,7} {
    \coordinate (b\i) at ({296-360/7*\i}:0.35); 
}

\foreach \i in {1,...,7} {
    \coordinate (a\i) at ({296-360/7*\i}:0.75); 
}

\foreach \i in {1,...,7} {
    \coordinate (c\i) at ({296-360/7*\i+10}:1.2); 
    \coordinate (d\i) at ({296-360/7*\i-10}:1.2); 
}

\foreach \i in {1,...,7} {
    \draw (a\i) -- (b\i);
    \draw (a\i) -- (c\i);
    \draw (a\i) -- (d\i);
}

\foreach \i in {1,...,7} {
    \pgfmathtruncatemacro{\nexti}{mod(\i,7)+1}
    \draw (b\i) -- (b\nexti); 
}

\foreach \i in {1,...,6} { 
    \pgfmathtruncatemacro{\nexti}{mod(\i,7)+1}
    \draw[bend left=60] (c\i) to (c\nexti); 
}

\foreach \i in {1,...,6} { 
    \pgfmathtruncatemacro{\nexti}{mod(\i,7)+1}
    \draw[bend left=60] (d\i) to (d\nexti); 
}

\draw[bend left=50] (c7) to (d1); 
\draw[] (d7) to (c1); 

\foreach \i in {1,...,7} {
    \node[My Style] at (b\i) {};
    \node[My Style] at (c\i) {};
    \node[My Style] at (d\i) {};
    \node[My Style] at (a\i) {};

}



\foreach \i in {11,...,18} {
    \coordinate (b\i) at ({3.25+(\i-11)*0.6},1.8-0.8);
    \coordinate (a\i) at ({3.25+(\i-11)*0.6},1-0.8);
    \coordinate (c\i) at ({3.25+(\i-11)*0.6+0.4},0.5-0.8);
    \coordinate (d\i) at ({3.25+(\i-11)*0.6},-0.2-0.8);

    \coordinate (lb\i) at ({3.15+(\i-11)*0.6},1.8-0.8+0.3);
    \coordinate (ld\i) at ({3.15+(\i-11)*0.6},-0.2-0.8-0.3);
}

\foreach \i in {11,...,17} {
    \draw (a\i) -- (b\i);
    \draw (a\i) -- (c\i);
    \draw (a\i) -- (d\i);
}

\foreach \i in {11,...,16} {
    \pgfmathtruncatemacro{\nexti}{\i+1}
    \draw (b\i) -- (b\nexti);
    \draw (c\i) -- (c\nexti);
    \draw (d\i) -- (d\nexti);
}

\foreach \i in {11,...,17} {
    \node[My Style] at (b\i) {};
    \node[My Style] at (a\i) {};
    \node[My Style] at (c\i) {};
    \node[My Style] at (d\i) {};
}

    \node[left=2pt] at (b11) {\(b_{1}\)};
    \node[left=2pt] at (c11) {\(c_{1}\)};
    \node[left=2pt] at (d11) {\(d_{1}\)};
    \node[left=2pt] at (a11) {\(a_{1}\)};
    
    \node[right=2pt] at (b17) {\(b_{7}\)};
    \node[right=2pt] at (c17) {\(c_{7}\)};
    \node[right=2pt] at (d17) {\(d_{7}\)};
    \node[right=2pt] at (a17) {\(a_{7}\)};

    \draw[bend left=20] (b11) to (b17);

    \draw (d17) to (c11);
    \draw (d11) to (c17);

    \draw[rounded corners][dashed] (lb11) -- (ld11) -- (ld12) -- (lb12) -- cycle;
    \node[above right =2pt] at (lb11) {\(J^{1}\)};

    \draw[rounded corners][dashed] (lb17) -- (ld17) -- (ld18) -- (lb18) -- cycle;
    \node[above right =2pt] at (lb17) {\(J^{7}\)};
 
\end{tikzpicture}
\end{center}
\caption{Two visualizations of the flower snark $J_7$.}
\label{J7}
\end{figure}

It is important to note that the position of the twist is a feature of the chosen representation rather than of the underlying graph. To see this, suppose that the twist lies between $J^{i-1}$ and $J^i$, and redraw the claw $J^i$ by placing $c_i$ in the former position of $d_i$ and $d_i$ in the former position of $c_i$, without changing any adjacencies. The crossed connections between $J^{i-1}$ and $J^i$ then appear parallel, while the previously parallel connections between $J^i$ and $J^{i+1}$ become crossed. Thus, the twist moves from one side of $J^i$ to the other, although the underlying graph remains unchanged. Repeating this procedure shows that the twist may be placed between any pair of consecutive claws.

We conclude the description of the flower snark with a simple structural observation that will be useful later; the bipartiteness of $J_k$ is determined entirely by the parity of $k$.

\begin{lemma}\label{bipartite}
The flower snark $J_k$, $k\ge3$, is bipartite if and only if $k$ is even.
\end{lemma}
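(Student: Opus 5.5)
We prove the two directions separately: for odd $k$ we exhibit an odd cycle, and for even $k$ we write down an explicit proper $2$-colouring.

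\emph{Odd $k$.} The $b$-vertices form the $k$-cycle $b_1b_2\dots b_kb_1$. For odd $k$ this is an odd cycle, so $J_k$ is not bipartite.

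\emph{Even $k$.} Work in the linear representation with the twist between $J^k$ and $J^1$. Define a colouring $\varphi\colon V(J_k)\to\{0,1\}$ by
\[
\varphi(b_i)=\varphi(c_i)=\varphi(d_i)\equiv i \pmod 2,\qquad \varphi(a_i)\equiv i+1 \pmod 2 .
\]
Since $k$ is even, indices are taken modulo $k$ without affecting parity.

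We now check that every edge joins vertices of different colours.
\begin{enumerate}
\item[$\bullet$] \emph{Claw edges.} The edges $a_ib_i$, $a_ic_i$, $a_id_i$ join colour $i+1$ to colour $i$ modulo $2$.
\item[$\bullet$] \emph{Parallel edges.} The edges $b_ib_{i+1}$, $c_ic_{i+1}$, $d_id_{i+1}$ for $i\in[k-1]$ join colours $i$ and $i+1$.
\item[$\bullet$] \emph{Closing edges.} The edges $b_kb_1$, $c_kd_1$ and $d_kc_1$ join a vertex of colour $k\equiv 0$ to one of colour $1$.
\end{enumerate}
Every edge of $J_k$ falls into one of these three classes, so $\varphi$ is a proper $2$-colouring and $J_k$ is bipartite.

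The twist causes no difficulty, because all three leaves of a claw receive the same colour, so swapping $c$ and $d$ is irrelevant. The only point needing care is the wrap-around edges, and these work precisely because $k$ is even.
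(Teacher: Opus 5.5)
Your proof is correct and is essentially the paper's argument. Your colour classes $\varphi^{-1}(1)$ and $\varphi^{-1}(0)$ are exactly the paper's sets $A$ and $B$, the even case is checked edge class by edge class with the closing (twist) edges working because $k$ is even, and the odd case uses the same odd cycle $b_1b_2\cdots b_kb_1$.
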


\begin{proof}
Suppose that $k$ is even, and define
$$
A=\bigcup_{i\equiv0(\mathrm{mod}\ 2)}
\{a_i,b_{i+1},c_{i+1},d_{i+1}\}
\qquad\text{and}\qquad
B=\bigcup_{i\equiv1(\mathrm{mod}\ 2)}
\{a_i,b_{i+1},c_{i+1},d_{i+1}\}.
$$
It is then clear that sets $A$ and $B$ form a partition of $V(J_k)$. Moreover, every edge within a claw and every edge joining consecutive claws has one endvertex in each set. Since $k$ is even, the same holds for the two edges forming the twist. Thus, both $A$ and $B$ are independent, and hence $J_k$ is bipartite.

Conversely, if $k$ is odd, then $b_1b_2\cdots b_kb_1$ is an odd cycle, and therefore $J_k$ cannot be bipartite.
\end{proof}

We now turn our attention to the $1$-limited domination number of flower snarks. We begin by establishing a constructive upper bound, which will later be shown to be sharp.

\begin{lemma}\label{upper}
Let $k\geq3$ and let $J_k$ be a flower snark. Then $\gammaenaL(J_k)\leq2k+1$, if $k$ is odd, and $\gammaenaL(J_k)\leq2k+2$, if $k$ is even.
\end{lemma}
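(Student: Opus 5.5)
The plan is to prove Lemma~\ref{upper} by an explicit construction: we exhibit a set $D\subseteq V(J_k)$ and check the two defining conditions of a $1$-limited dominating set directly. Since $|V(J_k)|=4k$, the target sizes $2k+1$ and $2k+2$ are $\frac n2+t$ with $t=1$ or $t=2$. Theorem~\ref{thm} tells us what the construction must look like. The bipartite graph $J_k[D,\overline D]$ should be a perfect matching almost everywhere, with the defects bounded by $d_0+d_2+2d_3=2t$. So we aim for $D$ inducing cycles with two defects (for instance $d_0=2$ or $d_2=2$), plus two further defects when $k$ is even.

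\textbf{Ruling out natural candidates.} First we record which simple choices cannot work, since this pins down the shape of $D$.
\begin{itemize}
\item If every $b_i$ lies in $D$, then each $a_i$ is the outside neighbour of $b_i$. Any $c$- or $d$-vertex in $D$ would then need both of its cycle neighbours in $D$. So $D$ would contain the whole $2k$-cycle on the $c$- and $d$-vertices, or none of it, and both options fail.
\item If no $b_i$ lies in $D$, then every $a_i$ is forced into $D$, hence so are all $c_i,d_i$. This gives $|D|=3k$.
\end{itemize}
Hence $D\cap\{b_1,\dots,b_k\}$ must consist of proper arcs of the $k$-cycle. Each endpoint $b_i$ of an arc forces $a_i\in D$ together with one of $c_i,d_i$. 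Each $c$- or $d$-vertex in $D$ whose $a$-neighbour is outside $D$ must have both cycle neighbours in $D$.

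\textbf{Periodic pattern.} Using these local rules, we look for a pattern of period $2$, or of period $4$ if period $2$ fails, along the linear representation with $J^1,\dots,J^k$. It should use on average two vertices of $D$ per claw, and within each period every $D$-vertex should have exactly one neighbour outside $D$. This is the ``perfect matching'' regime of Proposition~\ref{n/2}, realised locally.

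\textbf{Closing the pattern.} Because the twist may be moved between any two consecutive claws without changing the graph, we place it at a convenient seam, say between $J^k$ and $J^1$. For odd $k$ the periodic pattern cannot close up consistently: this is consistent with Lemma~\ref{bipartite} and with the parity of the $b$-cycle. So near the seam we insert a short modified segment that adds exactly one extra vertex to $D$. This creates the two allowed defects, for example two vertices isolated in $J_k[D,\overline D]$, giving $|D|=2k+1$. For even $k$ the mismatch comes from the twist interacting with the pattern, and we expect a corresponding correction costing two extra vertices, giving $|D|=2k+2$.

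\textbf{Verification.} For each vertex type ($a_i,b_i,c_i,d_i$), in a generic claw and in the claws adjacent to the seam, we check two things: every vertex of $\overline D$ has a neighbour in $D$, and every vertex of $D$ has at most one neighbour in $\overline D$.

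\textbf{Main obstacle.} The hard step is finding the periodic pattern itself. The rules above are quite rigid, since a $c$- or $d$-vertex of $D$ with $a$-neighbour outside $D$ propagates along the $cd$-cycle. Naive patterns such as period-$2$ choices $\{b_i,a_i,c_i\}$ and $\{b_i,c_i\}$ either leave some $d_i$ undominated or use more than two vertices per claw. So I would first search small cases ($k=3,4,5,6$) by hand, guided by the constraint that $J_k[D,\overline D]$ be nearly a perfect matching. I would then extract the repeating unit and the seam correction from those examples.
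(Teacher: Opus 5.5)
Your proposal never exhibits a set $D$. Lemma~\ref{upper} is a pure existence statement, so the explicit construction together with its vertex-by-vertex verification is the entire proof. Deferring it to a hand search of small cases, from which a repeating unit and a seam correction are to be ``extracted'', leaves nothing proved. The paper supplies exactly this missing content. It gives six explicit blocks $A,B,C,X,Y,Z$ and, for each residue of $k$ modulo $6$, a decomposition ($BC^m$, $ABC^m$, $YZ^m$, $XYZ^m$, $XZ^{m+1}$, $X^2Z^{m+1}$). The black vertices of each decomposition form a $1$-limited dominating set of size $2k+1$ or $2k+2$, checked through the markings at the block boundaries.

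The guidance you give for the search would also lead you astray.
\begin{itemize}
\item Since $d_0+d_2+2d_3=2t\le 4$ while the number of periods grows with $k$, the periodic part of any such construction must be defect-free. So $J_k[D,\overline D]$ must be a perfect matching there.
\item The arguments of Lemma~\ref{lem:ax-in-M} and Lemma~\ref{lem:a-b}(i),(iii) use this property only on a few consecutive claws. They force the $a$-vertices to alternate between $D$ and $\overline D$. They also force the edges $a_ib_i$ to join $D$ to $\overline D$ at exactly every third claw. Any three consecutive claws contain such an edge, since otherwise the middle $b$-vertex would have two neighbours on the other side.
\item Hence no pattern of period $2$ or $4$ exists. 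The period must be a multiple of $6$, as in the paper's blocks $C$ and $Z$ ($12$ vertices of $D$ on six claws).
\item Consequently the seam correction must be designed separately for each residue of $k$ modulo $6$, not just for each parity. For $k\equiv 2,4 \pmod 6$ the pattern already fails to close because $3\nmid k$, independently of the twist. Even when $6\mid k$, it cannot be closed across the twist, by Lemma~\ref{lem:NO-S}.
\end{itemize}

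Finally, your first ``ruling out'' bullet is incorrect. If every $b_i\in D$, then each $b_i$ already has two neighbours in $D$, so nothing forces $a_i\notin D$. In fact, for $k=3$ the set $\{b_1,b_2,b_3,a_1,c_1,a_3,d_3\}$ is a $1$-limited dominating set of size $7=2k+1$. The paper's block $B$, which is its construction for $J_3$, likewise contains all three $b$-vertices.
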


\begin{proof}
Let $J_k$ be a flower snark with $k \geq 3$. To prove the stated upper bounds, it suffices to construct a $1$-limited dominating set $D$ of $J_k$ of the required cardinality. In what follows, we distinguish six cases according to the value of $k$ modulo $6$. For this purpose, we first introduce six building blocks $A$, $B$, $C$, and $X$, $Y$, $Z$ as shown in Figure~\ref{ZGkon}. 

To facilitate further explanation, we also introduce labels for the vertices within each of these blocks. 
For instance, the block $A$ consists of four vertices, $V(A) = \{b_1^A, a_1^A, c_1^A, d_1^A\}$,
while the block $B$ contains twelve vertices,
$V(B) = \{b_i^B, a_i^B, c_i^B, d_i^B \mid i \in [3]\}$;
the superscript identifies the type of block, while the subscript specifies the position of a vertex within that block. For example, the graph $J_9$ can be constructed from one copy of block $B$ and one copy of block $C$ by adding the edges $b_3^B b_1^C$, $c_3^B c_1^C$, $d_3^B d_1^C$, $b_6^C b_1^B$, $c_6^C d_1^B$, and $d_6^C c_1^B$. In this sense, we may therefore denote this decomposition by $J_9=BC$.

\begin{figure}[ht!]
\begin{center}
\begin{tikzpicture}[scale=1.25]
\tikzset{My Style1/.style={draw, circle, fill=black, scale=0.5}}
\tikzset{My Style2/.style={draw, circle, fill=white, scale=0.5}}

\node[font=\large\bfseries] at (0,1.2){$A$};

\draw[thick] (-0.2,0.7)--(0.2,0.7);
\draw[thick] (0.1,-0.5)--(0.5,-0.5);
\draw[thick] (-0.2,-1)--(0.2,-1);

\node [My Style1, name=b1]   at (0,0.7) {};	
\node [My Style1, name=a1]    at (0,0) {};
\node [My Style2, fill=white, name=c1]   at (0.3,-0.5) {};	
\node [My Style1, name=d1]   at (0,-1) {};	

\node[label=left:$b_1^{A}$]  at (-0.1,0.7) {};
\node[label=left:$a_1^{A}$]  at (0,0) {};	
\node[label=right:$c_1^{A}$]  at (0.35,-0.5) {};	
\node[label=left:$d_1^{A}$]  at (-0.1,-1) {};	

\draw[thick] (b1)--(a1)--(c1);
\draw[thick] (d1)--(a1);

\draw[thick] ($(d1) + (-0.11,-0.11)$) rectangle ($(d1) + (0.11,0.11)$);
\draw[thick] ($(b1) + (-0.11,-0.11)$) rectangle ($(b1) + (0.11,0.11)$);

\node[font=\large\bfseries] at (2.7,1.2){$B$};

\draw[thick] (1.8,0.7)--(3.6,0.7);
\draw[thick] (2.1,-0.5)--(3.9,-0.5);
\draw[thick] (1.8,-1)--(3.6,-1);

\node [My Style1, name=b2]   at (2,0.7) {};	
\node [My Style1, name=a2]    at (2,0) {};
\node [My Style2, fill=white, name=c2]   at (2.3,-0.5) {};	
\node [My Style1, name=d2]   at (2,-1) {};	

\node [My Style1, name=b3]   at (2.7,0.7) {};	
\node [My Style2, name=a3]    at (2.7,0) {};
\node [My Style2, name=c3]   at (3,-0.5) {};	
\node [My Style2, name=d3]   at (2.7,-1) {};

\node [My Style1, name=b4]   at (3.4,0.7) {};	
\node [My Style1, name=a4]    at (3.4,0) {};
\node [My Style1, name=c4]   at (3.7,-0.5) {};	
\node [My Style2, name=d4]   at (3.4,-1) {};	

\node[label=left:$d_1^{B}$]  at (1.9,-1) {};
\node[label=left:$c_3^{B}$]  at (4.45,-0.5) {};

\draw[thick] (b2)--(a2)--(c2);
\draw[thick] (d2)--(a2);
\draw[thick] (b3)--(a3)--(c3);
\draw[thick] (d3)--(a3);
\draw[thick] (b4)--(a4)--(c4);
\draw[thick] (d4)--(a4);

\draw[thick] ($(d2) + (-0.11,-0.11)$) rectangle ($(d2) + (0.11,0.11)$);
\draw[thick] ($(c4) + (-0.11,-0.11)$) rectangle ($(c4) + (0.11,0.11)$);

\node[font=\large\bfseries] at (7.15,1.2){$C$};

\draw[thick] (5.2,0.7)--(9.1,0.7);
\draw[thick] (5.5,-0.5)--(9.4,-0.5);
\draw[thick] (5.2,-1)--(9.1,-1);

\node [My Style2, name=b5]   at (5.4,0.7) {};	
\node [My Style2, name=a5]    at (5.4,0) {};
\node [My Style1, name=c5]   at (5.7,-0.5) {};	
\node [My Style2, name=d5]   at (5.4,-1) {};	

\node [My Style2, name=b6]   at (6.1,0.7) {};	
\node [My Style1, name=a6]    at (6.1,0) {};
\node [My Style1, name=c6]   at (6.4,-0.5) {};	
\node [My Style1, name=d6]   at (6.1,-1) {};

\node [My Style2, name=b7]   at (6.8,0.7) {};	
\node [My Style2, name=a7]    at (6.8,0) {};
\node [My Style2, name=c7]   at (7.1,-0.5) {};	
\node [My Style1, name=d7]   at (6.8,-1) {};	

\node [My Style1, name=b8]   at (7.5,0.7) {};	
\node [My Style1, name=a8]    at (7.5,0) {};
\node [My Style2, name=c8]   at (7.8,-0.5) {};	
\node [My Style1, name=d8]   at (7.5,-1) {};

\node [My Style1, name=b9]   at (8.2,0.7) {};	
\node [My Style2, name=a9]    at (8.2,0) {};
\node [My Style2, name=c9]   at (8.5,-0.5) {};	
\node [My Style2, name=d9]   at (8.2,-1) {};

\node [My Style1, name=b10]   at (8.9,0.7) {};	
\node [My Style1, name=a10]    at (8.9,0) {};
\node [My Style1, name=c10]   at (9.2,-0.5) {};	
\node [My Style2, name=d10]   at (8.9,-1) {};

\node[label=left:$b_1^{C}$]  at (5.35,0.7) {};
\node[label=left:$c_1^{C}$]  at (5.55,-0.5) {};
\node[label=right:$c_6^{C}$]  at (9.3,-0.5) {};

\draw[thick] (b5)--(a5)--(c5);
\draw[thick] (d5)--(a5);
\draw[thick] (b6)--(a6)--(c6);
\draw[thick] (d6)--(a6);
\draw[thick] (b7)--(a7)--(c7);
\draw[thick] (d7)--(a7);
\draw[thick] (b8)--(a8)--(c8);
\draw[thick] (d8)--(a8);
\draw[thick] (b9)--(a9)--(c9);
\draw[thick] (d9)--(a9);
\draw[thick] (b10)--(a10)--(c10);
\draw[thick] (d10)--(a10);

\draw[thick] ($(c5) + (-0.11,-0.11)$) rectangle ($(c5) + (0.11,0.11)$);
\draw[thick] ($(c10) + (-0.11,-0.11)$) rectangle ($(c10) + (0.11,0.11)$);
\draw[thick] ($(b5) + (-0.11,-0.11)$) rectangle ($(b5) + (0.11,0.11)$);

\node[font=\large\bfseries] at (0,-2){$X$};

\draw[thick] (-0.2,-2.5)--(0.2,-2.5);
\draw[thick] (0.1,-3.7)--(0.5,-3.7);
\draw[thick] (-0.2,-4.3)--(0.2,-4.3);

\node [My Style2, name=xb1]   at (0,-2.5) {};	
\node [My Style1, name=xa1]    at (0,-3.2) {};
\node [My Style1, name=xc1]   at (0.3,-3.7) {};	
\node [My Style1, name=xd1]   at (0,-4.3) {};	

\draw[thick] (xb1)--(xa1)--(xc1);
\draw[thick] (xd1)--(xa1);

\draw[thick] ($(xc1) + (-0.11,-0.11)$) rectangle ($(xc1) + (0.11,0.11)$);
\draw[thick] ($(xd1) + (-0.11,-0.11)$) rectangle ($(xd1) + (0.11,0.11)$);

\node[font=\large\bfseries] at (2.7,-2){$Y$};

\draw[thick] (1.1,-2.5)--(4.3,-2.5);
\draw[thick] (1.4,-3.7)--(4.6,-3.7);
\draw[thick] (1.1,-4.3)--(4.3,-4.3);

\node [My Style2, name=yb2]   at (1.3,-2.5) {};	
\node [My Style1, name=ya2]    at (1.3,-3.2) {};
\node [My Style1, name=yc2]   at (1.6,-3.7) {};	
\node [My Style1, name=yd2]   at (1.3,-4.3) {};	

\node [My Style2, name=yb3]   at (2,-2.5) {};	
\node [My Style2, name=ya3]    at (2,-3.2) {};
\node [My Style2, name=yc3]   at (2.3,-3.7) {};	
\node [My Style1, name=yd3]   at (2,-4.3) {};

\node [My Style1, name=yb4]   at (2.7,-2.5) {};	
\node [My Style1, name=ya4]    at (2.7,-3.2) {};
\node [My Style2, name=yc4]   at (3,-3.7) {};	
\node [My Style1, name=yd4]   at (2.7,-4.3) {};	

\node [My Style1, name=yb5]   at (3.4,-2.5) {};	
\node [My Style1, name=ya5]    at (3.4,-3.2) {};
\node [My Style2, name=yc5]   at (3.7,-3.7) {};	
\node [My Style1, name=yd5]   at (3.4,-4.3) {};	

\node [My Style2, name=yb6]   at (4.1,-2.5) {};	
\node [My Style2, name=ya6]    at (4.1,-3.2) {};
\node [My Style2, name=yc6]   at (4.4,-3.7) {};	
\node [My Style1, name=yd6]   at (4.1,-4.3) {};

\draw[thick] (yb2)--(ya2)--(yc2);
\draw[thick] (yd2)--(ya2);
\draw[thick] (yb3)--(ya3)--(yc3);
\draw[thick] (yd3)--(ya3);
\draw[thick] (yb4)--(ya4)--(yc4);
\draw[thick] (yd4)--(ya4);
\draw[thick] (yb5)--(ya5)--(yc5);
\draw[thick] (yd5)--(ya5);
\draw[thick] (yb6)--(ya6)--(yc6);
\draw[thick] (yd6)--(ya6);

\draw[thick] ($(yc2) + (-0.11,-0.11)$) rectangle ($(yc2) + (0.11,0.11)$);
\draw[thick] ($(yd6) + (-0.11,-0.11)$) rectangle ($(yd6) + (0.11,0.11)$);
\draw[thick] ($(yc6) + (-0.11,-0.11)$) rectangle ($(yc6) + (0.11,0.11)$);

\node[font=\large\bfseries] at (7.15,-2){$Z$};

\draw[thick] (5.2,-2.5)--(9.1,-2.5);
\draw[thick] (5.5,-3.7)--(9.4,-3.7);
\draw[thick] (5.2,-4.3)--(9.1,-4.3);

\node [My Style2, name=b5]   at (5.4,-2.5) {};	
\node [My Style1, name=a5]    at (5.4,-3.2) {};
\node [My Style1, name=c5]   at (5.7,-3.7) {};	
\node [My Style1, name=d5]   at (5.4,-4.3) {};	

\node [My Style2, name=b6]   at (6.1,-2.5) {};	
\node [My Style2, name=a6]    at (6.1,-3.2) {};
\node [My Style1, name=c6]   at (6.4,-3.7) {};	
\node [My Style2, name=d6]   at (6.1,-4.3) {};

\node [My Style1, name=b7]   at (6.8,-2.5) {};	
\node [My Style1, name=a7]    at (6.8,-3.2) {};
\node [My Style1, name=c7]   at (7.1,-3.7) {};	
\node [My Style2, name=d7]   at (6.8,-4.3) {};	

\node [My Style1, name=b8]   at (7.5,-2.5) {};	
\node [My Style2, name=a8]    at (7.5,-3.2) {};
\node [My Style2, name=c8]   at (7.8,-3.7) {};	
\node [My Style2, name=d8]   at (7.5,-4.3) {};

\node [My Style1, name=b9]   at (8.2,-2.5) {};	
\node [My Style1, name=a9]    at (8.2,-3.2) {};
\node [My Style2, name=c9]   at (8.5,-3.7) {};	
\node [My Style1, name=d9]   at (8.2,-4.3) {};

\node [My Style2, name=b10]   at (8.9,-2.5) {};	
\node [My Style2, name=a10]    at (8.9,-3.2) {};
\node [My Style2, name=c10]   at (9.2,-3.7) {};	
\node [My Style1, name=d10]   at (8.9,-4.3) {};

\draw[thick] (b5)--(a5)--(c5);
\draw[thick] (d5)--(a5);
\draw[thick] (b6)--(a6)--(c6);
\draw[thick] (d6)--(a6);
\draw[thick] (b7)--(a7)--(c7);
\draw[thick] (d7)--(a7);
\draw[thick] (b8)--(a8)--(c8);
\draw[thick] (d8)--(a8);
\draw[thick] (b9)--(a9)--(c9);
\draw[thick] (d9)--(a9);
\draw[thick] (b10)--(a10)--(c10);
\draw[thick] (d10)--(a10);

\draw[thick] ($(d5) + (-0.11,-0.11)$) rectangle ($(d5) + (0.11,0.11)$);
\draw[thick] ($(c10) + (-0.11,-0.11)$) rectangle ($(c10) + (0.11,0.11)$);
\draw[thick] ($(d10) + (-0.11,-0.11)$) rectangle ($(d10) + (0.11,0.11)$);

\end{tikzpicture}
\end{center}
\caption{Building blocks of $J_k$.}
\label{ZGkon}
\end{figure}

Let $D$ be the set of all black vertices in the blocks shown in Figure~\ref{ZGkon}. A black vertex marked with a square has exactly one black neighbor within its block, so it must have at least one black neighbor in an adjacent block in order to satisfy the $1$-limited constraint. An unmarked white vertex already has a black neighbor within its block, whereas a white vertex marked with a square has no black neighbor within its block and must therefore have a black neighbor in an adjacent block. 

These markings provide a quick way to verify that the set $D$ of black vertices is a $1$-limited dominating set. For instance, in the decomposition $J_9=BC$, the two vertices in each of the pairs $c_3^B,c_1^C$ and $c_6^C,d_1^B$ provide each other with the required black neighbor in the adjacent block, while $b_1^C$ has a black neighbor in $B$. It follows that the set of black vertices is a $1$-limited dominating set of $J_9$ of cardinality $19$.

More generally, if $k = 3 + 6m$, where $m \in \mathbb{N}_0$, then $J_k$ has the decomposition $BC^m$, in which the block $B$ is followed by $m$ consecutive copies of blocks $C$. The set of black vertices is a $1$-limited dominating set of cardinality $7 + 12m = 7 + 12 \cdot \tfrac{k - 3}{6} = 2k + 1$, as desired.  The remaining cases modulo $6$ are verified in the same manner, and the corresponding constructions are summarized in Table~\ref{tabela}. For odd $k$, the table provides a $1$-limited dominating set of cardinality $2k+1$, while for even $k$, it provides one of cardinality $2k+2$. This completes the proof. \end{proof} 

\begin{table}[h!]
\centering
\begin{tabular}{|c|c|c|c|}
\hline
$k$ ($m\in \mathbb{N}_0$) & \begin{tabular}{c} Structure  \\ of $J_k$ \end{tabular}  & \begin{tabular}{c} Cardinality of $D$ \\ in terms of $m$ \end{tabular} & \begin{tabular}{c} Cardinality of $D$ \\ in terms of $k$ \end{tabular}  \\
\hline
$3+6m$ & $BC^m$ & $7+12m$ & $2k+1$ \\
$4+6m$ & $ABC^m$ & $10+12m$ & $2k+2$\\
$5+6m$ & $YZ^m$ & $11+12m$ & $2k+1$\\
$6+6m$ & $XYZ^m$ & $14+12m$ & $2k+2$\\
$7+6m$ & $XZ^{m+1}$ & $15+12m$ & $2k+1$\\
$8+6m$ & $X^2Z^{m+1}$ & $18+12m$ & $2k+2$\\
\hline
\end{tabular}
\caption{Block decompositions of $J_k$ and the cardinalities of the corresponding $1$-limited dominating sets.}
\label{tabela}
\end{table}

We next turn our attention to the corresponding lower-bound argument. Through several auxiliary results, we show that the general lower bound~\eqref{eq} is not attained by flower snarks.

First we introduce some additional notation. For a set $S\subseteq V(J_k)$ and every $i\in[k]$, let $M_i^-$ and $M_i^+$ denote the sets of edges of $E(J_k[S,\overline S])$ joining $J^{i-1}$ to $J^i$ and $J^i$ to $J^{i+1}$, respectively, and set
$M_i^\pm=M_i^-\cup M_i^+$.

Since the twist may interchange the $c$- and $d$-connections, we also introduce notation for the neighbors of $b_i$, $c_i$, and $d_i$ in the adjacent claws.
For every $x_i\in\{b_i,c_i,d_i\}$, let $x_i^-$ and $x_i^+$ denote the unique neighbor of $x_i$ in $J^{i-1}$ and $J^{i+1}$, respectively. These symbols always refer to the actual neighbors of $x_i$, hence, at the twist, $x_i^-$ or $x_i^+$ need not carry the same letter as $x_i$. All indices are taken modulo $k$, so that $J^0=J^k$ and $J^{k+1}=J^1$.

In what follows, for every set $S\subseteq V(J_k)$, let
$M_S=E(J_k[S,\overline S])$.
We first consider the case when $M_S$ is a perfect matching.

\begin{lemma}\label{lem:ax-in-M}
Let $J_k$, $k\geq3$, be a flower snark, and let $S\subseteq V(J_k)$ be such that $M_S$ is a perfect matching. Let $i\in[k]$ and let
$\{x_i,y_i,z_i\}=\{b_i,c_i,d_i\}$.
If $a_ix_i\in M_S$, then 
$M_i^\pm=\{y_i^-y_i,z_iz_i^+\}$
or
$M_i^\pm=\{y_iy_i^+,z_i^-z_i\}$.
In particular,
$|M_i^-|=|M_i^+|=1$.
\end{lemma}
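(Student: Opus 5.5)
Since $M_S$ is a perfect matching of $J_k$, every vertex is incident with exactly one edge of $M_S$. Assume $a_ix_i\in M_S$ and determine where the matching edges at $a_i$, $y_i$ and $z_i$ can go.

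First, $a_i$ is already covered by $a_ix_i$. Hence the edges $a_iy_i$ and $a_iz_i$ are not in $M_S$. Each vertex $w\in\{b_i,c_i,d_i\}$ has exactly three neighbours: $a_i$, $w^-$ and $w^+$. So $x_ix_i^-$ and $x_ix_i^+$ are not in $M_S$, since $x_i$ is already covered. Likewise, $y_i$ must be matched to exactly one of $y_i^-$, $y_i^+$, and $z_i$ to exactly one of $z_i^-$, $z_i^+$. This already gives $|M_i^\pm|=2$, with one edge incident with $y_i$ and one with $z_i$.

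The key step is to rule out both matching edges lying on the same side. The endpoints $x_i$ and $a_i$ of the matching edge lie on different sides of $(S,\overline S)$, say $a_i\in S$ and $x_i\in\overline S$; the case $a_i\in\overline S$ is symmetric. Because $a_iy_i\notin M_S$ and $a_iz_i\notin M_S$, both $y_i$ and $z_i$ lie in $S$. Now $x_i\in\overline S$ while $x_ix_i^\pm\notin M_S$, so $x_i^-,x_i^+\in\overline S$. Similarly, whichever of $y_i^\pm$ is not the matching partner of $y_i$ lies in $S$, and the same holds for $z_i$.

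Suppose, for contradiction, that both matching edges go to $J^{i-1}$, that is, $y_i^-,z_i^-\in\overline S$ while $y_i^+,z_i^+\in S$. Consider $a_{i+1}$ and the claw $J^{i+1}$, which contains $x_i^+\in\overline S$ and $y_i^+,z_i^+\in S$. (At the twist, $x_i^+,y_i^+,z_i^+$ are still three distinct leaves of $J^{i+1}$, so this holds in general.)
\begin{itemize}
\item If $a_{i+1}\in S$, then $a_{i+1}$ is matched to $x_i^+$. But $x_i^+$ is then matched to $a_{i+1}$ and is also adjacent to $x_i\in\overline S$. That edge joins two vertices of $\overline S$, so there is no conflict here. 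The conflict comes instead from $y_i^+,z_i^+\in S$: they are adjacent to $a_{i+1}\in S$, so each needs its matching partner outside $J^{i}\cup\{a_{i+1}\}$. This is consistent so far, so this case gives no contradiction by itself.
\item If $a_{i+1}\in\overline S$, then $a_{i+1}$ has two neighbours $y_i^+,z_i^+$ in $S$, so two $M_S$ edges meet at $a_{i+1}$. This contradicts $M_S$ being a matching.
\end{itemize}
So one must have $a_{i+1}\in S$, and a local argument on $J^{i+1}$ alone does not close the case. I expect this to be the main obstacle.

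The fallback plan is a global parity argument. One counts $M_S$ edges along the $b$-cycle and the $cd$-cycle, using that each claw has exactly one matched spoke or none. Alternatively, one propagates the forced pattern claw by claw around $J_k$ until a contradiction is reached at the twist. I would first try a cut-counting argument: the number of $M_S$ edges crossing between consecutive claws is forced to have constant parity, and two edges on one side with none on the other breaks this. If that fails, I would check the case analysis directly by propagating colourings.
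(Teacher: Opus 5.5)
There is a genuine gap: the proof is never completed, because you look for the contradiction at the wrong neighbouring claw. Under your assumption that both matching edges go to $J^{i-1}$, you already have what you need. You showed $x_i^-\in\overline S$, since $x_i\in\overline S$ is already covered by $a_ix_i$, and the assumption gives $y_i^-,z_i^-\in\overline S$. The vertices $x_i^-,y_i^-,z_i^-$ are the three distinct leaves of $J^{i-1}$, and this holds at the twist as well. So all of $b_{i-1},c_{i-1},d_{i-1}$ lie in $\overline S$. Since $N(a_{i-1})=\{b_{i-1},c_{i-1},d_{i-1}\}$, the vertex $a_{i-1}$ is incident with three edges of $M_S$ if $a_{i-1}\in S$, and with none if $a_{i-1}\in\overline S$. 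Either way this contradicts $M_S$ being a perfect matching. The case where both edges go to $J^{i+1}$ is symmetric, using $a_{i+1}$; this is exactly the paper's argument. On $J^{i+1}$ your leaves are split ($x_i^+\in\overline S$, $y_i^+,z_i^+\in S$), so no local contradiction exists there, which is why you got stuck.

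The fallback you sketch would not rescue the proof. First, every claw has exactly one matched spoke, not ``one or none''. All neighbours of $a_j$ lie in $J^j$, so $a_j$ must be matched by a spoke. Hence the numbers $m_j$ of $M_S$-edges between $J^j$ and $J^{j+1}$ satisfy $m_{j-1}+m_j=2$ for every $j$. The bad configuration $(m_{i-1},m_i)=(2,0)$ extends to the alternating pattern $2,0,2,0,\dots$, which satisfies all these equations whenever $k$ is even. It also has constant parity, since $0$ and $2$ are both even, so ``two on one side, none on the other'' does not break parity. Counting therefore only settles odd $k$, while the lemma is needed for all $k\ge 3$, including the even case used in Lemmas~\ref{lem:a-b} and~\ref{lem:NO-S}. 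The propagation alternative is left unspecified. The missing step is simply the local check at $a_{i-1}$ (respectively $a_{i+1}$) described above.
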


\begin{proof}
Let $T\in\{S,\overline S\}$ be the set containing $a_i$. Since
$a_ix_i\in M_S$, we have $x_i\in\overline T$. As $M_S$ is a perfect
matching, neither $a_iy_i$ nor $a_iz_i$ belongs to $M_S$, and hence
$y_i,z_i\in T$.
Since each of $y_i$ and $z_i$ must be incident with exactly one edge
of $M_S$, we have
\[
|M_S\cap\{y_i^-y_i,y_iy_i^+\}|=1
\qquad\text{and}\qquad
|M_S\cap\{z_i^-z_i,z_iz_i^+\}|=1.
\]
It remains to show that these two edges join $J^i$ to different
adjacent claws. 

Suppose, to the contrary, that both matching edges join $J^i$ to $J^{i+1}$. Then
$y_i^+,z_i^+\in\overline T$. Moreover, since $x_i$ is already matched with $a_i$, we have $x_ix_i^+\notin M_S$, and hence $x_i^+\in\overline T$. Thus all three vertices of
$\{b_{i+1},c_{i+1},d_{i+1}\}$ belong to $\overline T$. Consequently, $a_{i+1}$ is incident with either no edge or three edges of $M_S$, a contradiction.

The case in which both edges join $J^i$ to $J^{i-1}$ is analogous. 
Therefore,
$M_i^\pm=\{y_i^-y_i,z_iz_i^+\}$
or $M_i^\pm=\{y_iy_i^+,z_i^-z_i\}$.
In either case, $|M_i^-|=|M_i^+|=1$.
\end{proof}

If $S\subseteq V(J_k)$ is such that $M_S$ is a perfect matching, then the $a$- and $b$-vertices also follow a periodic pattern around the cyclic sequence of claws. More precisely, the $a$-vertices alternate between $S$ and $\overline S$, while the edges $a_ib_i$ belonging to $M_S$ occur at every third claw.

\begin{lemma}\label{lem:a-b}
Let $J_k$, $k\geq3$, be a flower snark, and let $S\subseteq V(J_k)$ be such that $M_S$ is a perfect matching. Then the following statements hold.
\begin{enumerate}
    \item[(i)] For every $i\in[k]$ and every $T\in\{S,\overline S\}$, if $a_i\in T$, then $a_{i+1}\in\overline T$.
    \item[(ii)] There exists some $i\in[k]$ such that $a_ib_i\in M_S$.
    \item[(iii)] For every $i\in[k]$, if $a_ib_i\in M_S$, then neither $a_{i+1}b_{i+1}$ nor $a_{i+2}b_{i+2}$ belongs to $M_S$, while $a_{i+3}b_{i+3}\in M_S$.
\end{enumerate}
Consequently, $k$ is divisible by $6$.
\end{lemma}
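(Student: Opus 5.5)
The plan is to read everything off the local structure forced by the perfect matching $M_S$, using Lemma~\ref{lem:ax-in-M} at every claw. First observe that, since $M_S$ is a perfect matching and $N(a_i)=\{b_i,c_i,d_i\}$, each $a_i$ is matched to exactly one leaf $x_i$ of its own claw. So Lemma~\ref{lem:ax-in-M} applies at every $i\in[k]$. Throughout I use the $x_i^\pm$ notation, so the twist needs no separate treatment.

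For (i), let $a_i\in T$ and let $a_ix_i\in M_S$. Then $x_i\in\overline T$ and $y_i,z_i\in T$. I then examine the three edges between $J^i$ and $J^{i+1}$.
\begin{itemize}
\item[$\bullet$] The edge $x_ix_i^+$ is not in $M_S$, because $x_i$ is already matched. Hence $x_i^+\in\overline T$.
\item[$\bullet$] By Lemma~\ref{lem:ax-in-M}, exactly one of $y_i,z_i$, say $y_i$, is matched forward. This gives $y_i^+\in\overline T$.
\item[$\bullet$] The other vertex $z_i$ is matched backward, so $z_iz_i^+\notin M_S$ and hence $z_i^+\in T$.
\end{itemize}
Thus two leaves of $J^{i+1}$ lie in $\overline T$ and one lies in $T$. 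Since $a_{i+1}$ has exactly one neighbor on the opposite side, $a_{i+1}\in\overline T$. Iterating (i) around the cyclic sequence of claws already shows that $k$ is even.

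For (ii), suppose no edge $a_ib_i$ lies in $M_S$. Then every $b_i$ lies on the same side as $a_i$. By (i) the $b$-vertices then alternate along the $k$-cycle $b_1\cdots b_kb_1$, so every edge of that cycle belongs to $M_S$. Then each $b_i$ is incident with two edges of $M_S$, a contradiction.

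For (iii), let $a_ib_i\in M_S$ with $a_i\in T$, so $b_i\in\overline T$. I then trace the $b$-cycle forward, using (i) repeatedly for the sides of the $a$-vertices.
\begin{itemize}
\item[$\bullet$] Since $b_i$ is already matched, $b_ib_{i+1}\notin M_S$, so $b_{i+1}\in\overline T$. By (i), $a_{i+1}\in\overline T$, so $a_{i+1}b_{i+1}\notin M_S$.
\item[$\bullet$] Hence $b_{i+1}$ must be matched along the cycle, necessarily to $b_{i+2}$. This gives $b_{i+2}\in T$, and since $a_{i+2}\in T$, also $a_{i+2}b_{i+2}\notin M_S$.
\item[$\bullet$] Since $b_{i+2}$ is matched to $b_{i+1}$, the edge $b_{i+2}b_{i+3}$ is not in $M_S$. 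So $b_{i+3}\in T$, while $a_{i+3}\in\overline T$, and therefore $a_{i+3}b_{i+3}\in M_S$.
\end{itemize}

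For the consequence, (ii) and (iii) show that the set of indices $i$ with $a_ib_i\in M_S$ is nonempty. Moreover, it is exactly one residue class modulo $3$ taken cyclically modulo $k$: by (iii), from any such index the next such index is exactly three steps ahead. Closing up the cycle therefore forces $3\mid k$. Together with $2\mid k$ from (i), this gives $6\mid k$. I do not expect a serious obstacle. The most delicate point is the bookkeeping in (i), namely which of $y_i,z_i$ is matched forward, and this is handled directly by Lemma~\ref{lem:ax-in-M}.
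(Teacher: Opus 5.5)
Your proposal is correct and follows essentially the same route as the paper. Part (i) uses Lemma~\ref{lem:ax-in-M} to put exactly two leaves of $J^{i+1}$ on the opposite side, (ii) uses alternation to force every edge of the $b$-cycle into $M_S$, (iii) traces the $b$-cycle forward from $b_i$, and the divisibility by $6$ comes from alternation (giving $2\mid k$) together with the period-three pattern (giving $3\mid k$), which you spell out slightly more explicitly than the paper does.
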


\begin{proof}

\medskip
(i) Let $T\in\{S,\overline S\}$ and suppose that $a_i\in T$. Since $M_S$ is a perfect matching, exactly one of $b_i,c_i,d_i$ belongs to $\overline T$; let $x_i$ denote this vertex, and let $y_i$ and $z_i$ denote the remaining two. Thus $a_ix_i\in M_S$, while $y_i,z_i\in T$.

Since $x_i$ is already matched with $a_i$, we have $x_ix_i^+\notin M_S$, and hence $x_i^+\in\overline T$. By Lemma~\ref{lem:ax-in-M}, exactly one of $y_i$ and $z_i$ is matched to its neighbor in $J^{i+1}$. Consequently, one of $y_i^+,z_i^+$ belongs to $\overline T$, while the other belongs to $T$. Thus exactly two of $b_{i+1},c_{i+1},d_{i+1}$ belong to $\overline T$, while the remaining one belongs to $T$. Since $M_S$ is a perfect matching, it follows that $a_{i+1}\in\overline T$.

\medskip
(ii) Suppose, to the contrary, that $a_ib_i\notin M_S$ for every $i\in[k]$. Then $a_i$ and $b_i$ belong to the same set for every $i\in[k]$. By~(i), consecutive $a$-vertices belong to complementary sets, and hence the same is true for consecutive $b$-vertices. Hence, $b_ib_{i+1}\in M_S$ for every $i\in[k]$, which would make every $b_i$ incident with two edges of $M_S$, a contradiction.

\medskip
(iii) Suppose that $a_ib_i\in M_S$ for some $i\in[k]$, and choose $T\in\{S,\overline S\}$ so that $a_i\in T$ and $b_i\in\overline T$. Since $b_i$ is already matched with $a_i$, we have $b_ib_{i+1}\notin M_S$, and hence $b_{i+1}\in\overline T$. By~(i), we also have $a_{i+1}\in\overline T$. Thus $a_{i+1}b_{i+1}\notin M_S$.
Since $M_S$ is a perfect matching, it follows that
$b_{i+1}b_{i+2}\in M_S$,
and hence $b_{i+2}\in T$. Since $b_{i+2}$ is already matched with $b_{i+1}$, we have
$b_{i+2}b_{i+3}\notin M_S$,
and therefore $b_{i+3}\in T$. By~(i), $a_{i+2}\in T$
and $a_{i+3}\in\overline T$.
Thus neither $a_{i+1}b_{i+1}$ nor $a_{i+2}b_{i+2}$ belongs to $M_S$, whereas $a_{i+3}b_{i+3}\in M_S$.

\medskip
Finally, property (i) shows that the $a$-vertices alternate between $S$ and $\overline S$ around the cyclic sequence of claws, so $k$ is even. On the other hand, properties~(ii) and~(iii) imply that $k$ is divisible by $3$. Consequently, $k$ is divisible by $6$.
\end{proof}

The preceding two lemmas show that, if $M_S$ is a perfect matching for some $S\subseteq V(J_k)$, the edges of $M_S$ are subject to strong local and periodic restrictions. We now use these restrictions to show that no such perfect matching exists for any flower snark $J_k$.

\begin{lemma}\label{lem:NO-S}
Let $J_k$, $k\geq3$, be a flower snark. Then there is no set
$S\subseteq V(J_k)$ such that $M_S$ is a perfect matching.
\end{lemma}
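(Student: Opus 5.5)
Assume, to the contrary, that $S\subseteq V(J_k)$ is a set for which $M_S$ is a perfect matching. Lemma~\ref{lem:a-b} then gives that $k$ is divisible by $6$. It also fixes the $a$- and $b$-vertices completely: the $a$-vertices alternate between $S$ and $\overline S$, and the edges $a_ib_i$ of $M_S$ occur exactly at every third claw. The twist can be placed between any pair of consecutive claws, so we may shift indices and choose $T\in\{S,\overline S\}$ so that $a_1b_1\in M_S$ and $a_1\in T$. Following the proof of Lemma~\ref{lem:a-b}(iii), the sets containing $a_i$ and $b_i$ are then known for all $i$, and they are periodic with period $6$.

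Next I would study the $2k$-cycle $C=c_1c_2\cdots c_kd_1d_2\cdots d_kc_1$. Every vertex of $C$ is incident with exactly one edge of $M_S$, and that edge is either an edge of $C$ or a spoke to $a_i$. For each $i$, the claw condition says that exactly one of $b_i,c_i,d_i$ lies in the set opposite to $a_i$. Since the side of $b_i$ is already known, this tells us how many of $c_i,d_i$ are opposite to $a_i$: none when $a_ib_i\in M_S$, and exactly one otherwise.

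The main step is to show that, once one vertex of $C$ and the direction of its matching edge are fixed, the colouring of $C$ and the position of its $M_S$-edges are forced step by step along $C$. I would propagate this using Lemma~\ref{lem:ax-in-M} and the argument of Lemma~\ref{lem:a-b}(i). In claw $1$, one of $c_1,d_1$ is matched to the left and the other to the right. The neighbour in $J^2$ of the right-matched vertex lies in $\overline T$. The neighbour of the left-matched vertex lies in $T$, which matches the fact that $a_2\in\overline T$ must have exactly one neighbour in $T$. Continuing in this way should show that the labelled sequence along $C$ (the side of each vertex together with whether its matching edge goes forward, backward, or to $a$) is periodic with period $6$. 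I expect this to be the main obstacle. The bookkeeping has to be done through three consecutive claws, covering the cases $a_ib_i\in M_S$ and $a_ib_i\notin M_S$, and one must check that each row really advances by one claw per step. If period $6$ does not come out cleanly, I would fall back to the coarser invariant \emph{period dividing $k$}, which suffices below.

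The contradiction then comes from the twist. The vertices $c_1$ and $d_1$ lie at distance exactly $k$ on $C$, and $6\mid k$. By periodicity they therefore carry the same label: they lie in the same set, and their matching edges leave in the same direction along $C$. The orientation of $C$ agrees with the order of the claws on both the $c$-row and the $d$-row, so both $c_1$ and $d_1$ would be matched to $J^{2}$, or both to $J^{k}$. This contradicts Lemma~\ref{lem:ax-in-M} applied with $x_1=b_1$, which requires one of $c_1,d_1$ to be matched to $J^{k}$ and the other to $J^2$. Hence no such $S$ exists.
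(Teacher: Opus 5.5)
Your strategy is sound and differs from the paper's, but the step that carries all the weight, the periodicity of the labels along $C$, is only announced (``should show'', ``main obstacle''). The fallback ``period dividing $k$'' is not a cheaper alternative. A deterministic forward rule around a cycle of length $2k$ only guarantees that the labels close up after $2k$ steps, not that positions $p$ and $p+k$ agree, so the rule has to be computed either way.

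The computation is short. Orient $C$ as $c_1\cdots c_kd_1\cdots d_kc_1$, so that the successor of every vertex lies in the next claw. Let $v$ follow $u$ on $C$; their centres $a_u,a_v$ lie on opposite sides by Lemma~\ref{lem:a-b}(i).
\begin{itemize}
\item If $u$ is matched forward, then $v$ is matched backward.
\item If $u$ is matched backward, then $u$ lies on the side of $a_u$ and $uv\notin M_S$. So $v$ lies opposite to $a_v$ and is matched to $a_v$.
\item If $u$ is matched to $a_u$, then $v$ lies on the side of $a_v$. Since neither $uv$ nor $va_v$ is in $M_S$, $v$ is matched forward.
\end{itemize}
So the directions run forward, backward, spoke, forward, \dots\ around $C$. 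This involves no $b$-vertices and no case split over $a_ib_i\in M_S$. The side of a vertex is then fixed by its direction and the side of its centre, which gives your period $6$. Since $3\mid k$, the vertices $c_j$ and $d_j$ are $k$ apart on $C$ and share the centre $a_j$, so they get identical labels. Your final paragraph then finishes the proof, and in fact the contradiction appears at any claw.

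The paper proceeds differently. It fixes the whole $a$/$b$ pattern using all of Lemma~\ref{lem:a-b} and moves the twist next to a claw with $a_{i+1}b_{i+1}\in M_S$. It then forces the $c$/$d$ colouring on six consecutive claws after relabelling $c$ and $d$, shows that this block repeats, and ends with a vertex $c_i$ at the twist carrying two $M_S$-edges.

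Treating the $c$- and $d$-vertices as one $2k$-cycle makes the twist invisible and removes the case analysis. It uses only Lemma~\ref{lem:a-b}(i) and Lemma~\ref{lem:ax-in-M}. Even $3\mid k$ comes for free, since a direction pattern of period $3$ must close up on a cycle of length $2k$.

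The two contradictions are the same phenomenon read from opposite ends. The paper lets the claws fix the relative phase of the two rows and fails at the twist. You let the passage through the twist along $C$ fix the phase and fail at a claw.
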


\begin{proof}
Suppose, to the contrary, that $M_S$ is a perfect matching for some
$S\subseteq V(J_k)$. By Lemma~\ref{lem:a-b}, $k=6m$ for some
$m\in\mathbb N$.

By Lemma~\ref{lem:a-b}(ii), we may choose $i\in[k]$ such that
$a_{i+1}b_{i+1}\in M_S$. Without loss of generality, assume that
$a_{i+1}\in\overline S$ and $b_{i+1}\in S$. Lemma~\ref{lem:a-b}(i)
then gives
\begin{equation}\label{a:inANDout}
a_{i+2t}\in S
\qquad\text{and}\qquad
a_{i+1+2t}\in\overline S,
\end{equation}
for every nonnegative integer $t$, where all indices are taken modulo
$k$. Moreover, Lemma~\ref{lem:a-b}(iii) shows that
$a_{i+1+3t}b_{i+1+3t}\in M_S$, while no other edge of the form
$a_jb_j$ belongs to $M_S$. It follows that
\begin{equation}\label{b:inANDout}
b_{i+6t},b_{i+1+6t},b_{i+2+6t}\in S
\qquad\text{and}\qquad
b_{i+3+6t},b_{i+4+6t},b_{i+5+6t}\in\overline S.
\end{equation}

It remains to determine which of the $c$- and $d$-vertices belong to
$S$ and which belong to $\overline S$. Before doing so, recall that
interchanging the positions of the $c$- and $d$-vertices throughout a
suitable consecutive sequence of claws moves the twist without
affecting the $a$- and $b$-vertices. We may therefore choose a
representation in which the twist lies between $J^{i-1}$ and $J^i$.
Consequently, all other connections between consecutive claws are
untwisted.

Since $a_{i+1}\in\overline S$, $b_{i+1}\in S$, and $M_S$ is a perfect
matching, we have
$c_{i+1},d_{i+1}\in\overline S$.
By Lemma~\ref{lem:ax-in-M}, one of the edges of $M_S$ incident with
$c_{i+1}$ and $d_{i+1}$ joins $J^{i+1}$ to $J^i$, while the other joins
$J^{i+1}$ to $J^{i+2}$. By globally interchanging the labels $c$ and
$d$, which does not change the position of the twist, we may assume that $c_ic_{i+1},\;d_{i+1}d_{i+2}\in M_S$.
Hence,
$c_i,d_{i+2}\in S$
and
$d_i,c_{i+2}\in\overline S$.
Since $a_{i+2}\in S$ and $c_{i+2}\in\overline S$, we have
$a_{i+2}c_{i+2}\in M_S$. As $M_S$ is a perfect matching,
$c_{i+2}c_{i+3}\notin M_S$, and hence
$c_{i+3}\in\overline S$.
Since $a_{i+3},b_{i+3},c_{i+3}\in\overline S$ and $M_S$ is a perfect
matching, we have $d_{i+3}\in S$. Moreover, since
$a_{i+4}\in S$, $b_{i+4}\in\overline S$, and $M_S$ is a perfect
matching, we have
$c_{i+4},d_{i+4}\in S$.
In particular,
$c_{i+3}c_{i+4}\in M_S$.
Since $a_{i+4}b_{i+4}\in M_S$, Lemma~\ref{lem:ax-in-M} gives
\[
M_{i+4}^{\pm}
=
\{c_{i+3}c_{i+4},d_{i+4}d_{i+5}\}.
\]
Thus $d_{i+5}\in\overline S$. Moreover, since $c_{i+4}$ is already
matched with $c_{i+3}$, we have $c_{i+4}c_{i+5}\notin M_S$, and hence
$c_{i+5}\in S$. Since $a_{i+5}\in\overline S$, it also follows that
$a_{i+5}c_{i+5}\in M_S$.
The resulting forced configuration is shown in Figure~\ref{fig:2}.

\begin{figure} [h!]
\centering

\begin{tikzpicture}[scale=1.2]

\tikzset{StyleBlack/.style={draw, circle, fill=black, scale=0.5}}
\tikzset{StyleWhite/.style={draw, circle, fill=white, scale=0.5}}
\tikzset{StyleGray/.style={draw, circle, fill=gray, scale=0.75}}

\foreach \i in {0,...,6} {
    \coordinate (b\i) at ({3.25+(\i-11)*0.6},1.8);
    \coordinate (a\i) at ({3.25+(\i-11)*0.6},1);
    \coordinate (c\i) at ({3.25+(\i-11)*0.6+0.4},0.5);
    \coordinate (d\i) at ({3.25+(\i-11)*0.6},-0.2);
}

\coordinate (bl0) at (3.15-11*0.6,1.8);
\coordinate (cl0) at (3.15-11*0.6,0.5);
\coordinate (dl0) at (3.15-11*0.6,-0.2);
\coordinate (al0) at (3.15-11*0.6,1);

\coordinate (alevo) at (3.15-11*0.6-0.1,1);
\coordinate (adesno) at (3.15-5.5*0.6,1);
\coordinate (blevo) at (3.15-11*0.6-0.1,1.8);
\coordinate (bdesno) at (3.15-5.5*0.6,1.8);
\coordinate (clevo) at (3.15-11*0.6-0.1+0.4,0.5);
\coordinate (cdesno) at (3.15-5.5*0.6+0.4,0.5);
\coordinate (dlevo) at (3.15-11*0.6-0.1,-0.2);
\coordinate (ddesno) at (3.15-5.5*0.6,-0.2);

\foreach \i in {0,...,5} {
    \draw (a\i) -- (b\i);
    \draw (a\i) -- (c\i);
    \draw (a\i) -- (d\i);
}

\draw (blevo) -- (bdesno);
\draw (clevo) -- (cdesno);
\draw (dlevo) -- (ddesno);

\foreach \i in {0,...,4} {
    \pgfmathtruncatemacro{\nexti}{\i+1}
    \draw (b\i) -- (b\nexti);
    \draw (c\i) -- (c\nexti);
    \draw (d\i) -- (d\nexti);
}

    \draw[red,line width=1.5pt] (a1) -- (b1);
    \draw[red,line width=1.5pt] (a4) -- (b4);
    \draw[red,line width=1.5pt] (b2) -- (b3);

    \draw[red,line width=1.5pt] (c0) -- (c1);
    \draw[red,line width=1.5pt] (c3) -- (c4);
    \draw[red,line width=1.5pt] (d1) -- (d2);
    \draw[red,line width=1.5pt] (d4) -- (d5);

    \draw[red,line width=1.5pt] (a0) -- (d0);
    \draw[red,line width=1.5pt] (a3) -- (d3);

    \draw[red,line width=1.5pt] (a2) -- (c2);
    \draw[red,line width=1.5pt] (a5) -- (c5);

    \node[StyleBlack] at (b0) {};
    \node[StyleBlack] at (b1) {};
    \node[StyleBlack] at (b2) {};
    \node[StyleWhite] at (b3) {};
    \node[StyleWhite] at (b4) {};
    \node[StyleWhite] at (b5) {};

    \node[StyleBlack] at (a0) {};
    \node[StyleBlack] at (a2) {};
    \node[StyleBlack] at (a4) {};
    \node[StyleWhite] at (a1) {};
    \node[StyleWhite] at (a3) {};
    \node[StyleWhite] at (a5) {};

    \node[StyleBlack] at (c0) {};
    \node[StyleWhite] at (c1) {};
    \node[StyleWhite] at (c2) {};
    \node[StyleWhite] at (c3) {};
    \node[StyleBlack] at (c4) {};
    \node[StyleBlack] at (c5) {};

    \node[StyleWhite] at (d0) {};
    \node[StyleWhite] at (d1) {};
    \node[StyleBlack] at (d2) {};
    \node[StyleBlack] at (d3) {};
    \node[StyleBlack] at (d4) {};
    \node[StyleWhite] at (d5) {};

\node[left=2pt] at (bl0) {$b_i$};
\node[left=2pt] at (al0) {$a_i$};
\node[left=2pt] at (cl0) {$c_i$};
\node[left=2pt] at (dl0) {$d_i$};

\def\i{0} 

\foreach \offset in {0,...,5} {
    \pgfmathtruncatemacro{\nodei}{\i + \offset }
    \node[above=10pt] at (b\nodei) {%
        \(
        J^{%
            i
            \ifnum\offset<0
                -\the\numexpr -\offset\relax
            \else
                \ifnum\offset>0
                    +\offset
                \fi
            \fi
        }
        \)
    };
}

\end{tikzpicture}
\caption{The forced configuration; black vertices belong to $S$, white vertices belong to $\overline{S}$, and red edges belong to $M_S$.}
\label{fig:2}
\end{figure}

If $k=6$, then the twist joins $J^{i+5}$ to $J^i$. More precisely,
$c_i\in S$ is adjacent to both $c_{i+1}\in\overline S$ and
$d_{i+5}\in\overline S$. Thus $c_i$ is incident with two edges of
$M_S$, contradicting the fact that $M_S$ is a perfect matching.

We may therefore assume that $k=6m$ for some integer $m\geq2$. Since
the twist lies between $J^{i-1}$ and $J^i$, all other connections
between consecutive claws are untwisted. By~\eqref{a:inANDout}
and~\eqref{b:inANDout}, we have
$a_{i+6},b_{i+6}\in S$.
Moreover, since $a_{i+5}c_{i+5}\in M_S$, the vertex $c_{i+5}$ is
already matched with $a_{i+5}$. Hence
$c_{i+5}c_{i+6}\notin M_S$, and therefore $c_{i+6}\in S$. Since
$a_{i+6},b_{i+6},c_{i+6}\in S$ and $M_S$ is a perfect matching, it
follows that
$d_{i+6}\in\overline S$.
Thus, the configuration on $J^{i+6}$ coincides with that on $J^i$.

Repeating the same arguments block by block shows that, for every
$t\in\{0,1,\ldots,m-1\}$, the configurations on
$J^{i+6t},J^{i+6t+1},\ldots,J^{i+6t+5}$
coincide with those on
$J^i,J^{i+1},\ldots,J^{i+5}$,
respectively. In particular,
$d_{i+k-1}\in\overline S$. At the twist, $c_i\in S$ is adjacent to
$d_{i+k-1}\in\overline S$, while it is also adjacent to
$c_{i+1}\in\overline S$. Hence $c_i$ is incident with two edges of
$M_S$, again contradicting the fact that $M_S$ is a perfect matching.
\end{proof}

The upper bounds established earlier, together with the structural restrictions obtained above, now determine the exact $1$-limited domination number of every flower snark.

\begin{theorem}\label{thm:Jk}
Let $k\geq3$, and let $J_k$ be a flower snark. Then
$$
\gamma_1^{\mathrm L}(J_k)=
    \begin{cases}
        2k+1, & \text{if $k$ is odd},\\[1mm]
        2k+2, & \text{if $k$ is even}.
    \end{cases}
$$
\end{theorem}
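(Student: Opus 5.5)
The upper bounds are exactly Lemma~\ref{upper}, so only matching lower bounds are needed. Since $J_k$ is cubic of order $n=4k$, inequality~\eqref{eq} gives $\gammaenaL(J_k)\ge 2k$. Write $\gammaenaL(J_k)=2k+t$ with $t\ge 0$ and let $D$ be a minimum $1$-limited dominating set. The plan is to exclude $t=0$ for every $k$, and to exclude $t=1$ when $k$ is even.

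\emph{Step 1: $t\ge1$.} If $t=0$, then Theorem~\ref{thm}(i) gives $d_0=d_2=d_3=0$. By Theorem~\ref{thm}(iii), $J_k[D,\overline D]$ is then a disjoint union of copies of $K_2$ covering all vertices, so $M_D$ is a perfect matching. (Equivalently, apply Proposition~\ref{n/2}, as $J_k$ is connected.) This contradicts Lemma~\ref{lem:NO-S}. Hence $\gammaenaL(J_k)\ge 2k+1$, which settles the odd case.

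\emph{Step 2: $k$ even, exclude $t=1$.} Here Theorem~\ref{thm}(i),(ii) give $d_0+d_2+2d_3=2$ with $d_0,d_2$ even. The only possibilities are therefore $(d_0,d_2,d_3)\in\{(2,0,0),(0,2,0),(0,0,1)\}$. I would use the bipartition $(A,B)$ from Lemma~\ref{bipartite} and a counting/parity argument in each case.

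\begin{itemize}
\item If $d_0=2$ and $d_2=d_3=0$, Corollary~\ref{cor:0}(ii) makes $G[\overline D]$ $2$-regular of order $2k-1$.
\item If $d_0=0$, Corollary~\ref{cor:0}(i) makes $G[D]$ $2-$regular of order $2k+1$.
\end{itemize}

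In a bipartite graph every $2$-regular subgraph is a union of even cycles, hence has even order. This contradicts both odd orders above. That leaves only the case $d_0=2$, $d_2=d_3=0$ already handled, and the cases $d_0=0$ (namely $(0,2,0)$ and $(0,0,1)$), which both fall under Corollary~\ref{cor:0}(i). So all three possibilities are eliminated, and $\gammaenaL(J_k)\ge 2k+2$ for even $k$.

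The main obstacle is Step 1, which relies on the nonexistence of a perfect-matching cut. That has already been established in Lemma~\ref{lem:NO-S} via Lemmas~\ref{lem:ax-in-M} and~\ref{lem:a-b}. The even case then reduces to the simple parity observation that bipartite $2$-regular graphs have even order. One point I would double-check is that the case list is exhaustive: $d_3=1$ forces $d_0=d_2=0$, and otherwise $d_0+d_2=2$ with both even gives exactly the two remaining options. Combining these lower bounds with Lemma~\ref{upper} yields the theorem.
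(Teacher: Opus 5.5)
Your proposal is correct and follows the paper's proof essentially step for step. The lower bound $2k+1$ comes from Lemma~\ref{lem:NO-S} together with Theorem~\ref{thm} (equivalently Proposition~\ref{n/2}). For even $k$, the three admissible triples $(d_0,d_2,d_3)$ are eliminated via Corollary~\ref{cor:0}, since the bipartite graph $J_k$ has no induced $2$-regular subgraph of odd order.
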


\begin{proof}
Since $|V(J_k)|=4k$,~\eqref{eq} gives $\gammaenaL(J_k)\geq2k$. Moreover, Lemma \ref{lem:NO-S} and Proposition \ref{n/2} rule out equality, and hence $\gammaenaL(J_k)\ge2k+1$.
On the other hand, Lemma \ref{upper} gives that $\gammaenaL(J_k)\le 2k+1$ when $k$ is odd. Consequently, $\gammaenaL(J_k)=2k+1$ whenever $k$ is odd. 

Assume now that $k$ is even, and suppose that
$\gammaenaL(J_k)=2k+1$, and let $D$ be a $\gammaenaL(J_k)$-set.
Since  $|D|=2k+1$, from Theorem \ref{thm}(i) and (ii), it follows that $d_0+d_2+2d_3=2$, where $d_0$ and $d_2$ are even. Consequently, $(d_0,d_2,d_3)\in\{(2,0,0), (0,0,1), (0,2,0)\}$. If $(d_0,d_2,d_3)=(2,0,0)$, then Corollary \ref{cor:0}(ii) shows that $J_k[\overline{D}]$ is $2$-regular of order $|\overline{D}|=2k-1$. In either of the other two cases, $d_0=0$, and Corollary \ref{cor:0}(i) ensures that $J_k[D]$ is $2$-regular of order $|D|=2k+1$. 
Thus, in each case, $J_k$ contains an induced $2$-regular subgraph of odd order, and hence an odd cycle. This contradicts Lemma~\ref{bipartite}, since $J_k$ is bipartite when $k$ is even. Therefore,
$\gammaenaL(J_k)\geq 2k+2$.
Together with the upper bound from Lemma~\ref{upper}, this yields
$\gammaenaL(J_k)=2k+2$ for even $k$.

\end{proof}

\section{Conclusion}
\label{conclusion}

We have established structural properties of minimum $1$-limited dominating sets in cubic graphs in terms of the parameter $t$ defined by
$\gammaenaL(G)=\frac n2+t$,
where $\tfrac n2$ is the lower bound in~\eqref{eq}. Our applications show that values close to this bound occur in classical infinite families of snarks. Generalized Blanu\v{s}a snarks attain the bound $\tfrac n2$, whereas odd and even flower snarks have $1$-limited domination numbers $\tfrac n2+1$ and $\tfrac n2+2$, respectively.

The relationship between $\gammaenaL(G)$ and the induced cycles vertex number $\cind(G)$ was recently investigated in~\cite{Trilobites}. There, the trilobite graphs were introduced as a family satisfying
$\gammaenaL(G)>\cind(G)$,
showing that the inequality $\gammaenaL(G)\leq\cind(G)$ does not hold in general. Two conjectures were proposed to describe how exceptional this phenomenon might be, but both were subsequently disproved in~\cite{Riste}. In particular, an infinite family $H(k)$ was constructed for which
$\cind(H(k))=\frac n2+2$
and
$\gammaenaL(H(k))=\frac n2+3$.
Nevertheless, all these examples satisfy the conjectured bound~\eqref{eq:cind-conj}.

The structural results obtained here suggest an alternative approach to this conjecture. Since
$\gammaenaL(G)=\frac n2+t$,
the bound~\eqref{eq:cind-conj} is equivalently written as
\[
\cind(G)\geq\gammaenaL(G)-t.
\]
Although this is only a reformulation, it is naturally suited to the structural viewpoint developed in this paper. Indeed, Theorem~\ref{thm} gives
$d_0+d_2+2d_3=2t$,
so that small values of $t$ severely restrict the number and types of vertices in $G[D,\overline D]$ whose degree differs from one.
For $t=0$, Theorem~\ref{thm} forces
$d_0=d_2=d_3=0$, and hence $G[D,\overline D]$ is a perfect matching. In this case, Corollary~\ref{cor:0} immediately yields
$\cind(G)\geq\frac n2$.
More generally, Corollary~\ref{cor:0}(i) establishes~\eqref{eq:cind-conj} whenever $d_0=0$, while Corollary~\ref{cor} covers certain additional configurations in which deleting a single vertex from $D$ yields an induced $2$-regular subgraph of order at least $\frac n2$.

The first case not completely covered by these results occurs when $\gammaenaL(G)=\frac n2+1$.
If $D$ is a $\gammaenaL(G)$-set, then $t=1$, and Theorem~\ref{thm}, together with the parity of $d_0$ and $d_2$, gives
\[
(d_0,d_2,d_3)\in
\{(0,2,0),(0,0,1),(2,0,0)\}.
\]
The first two cases satisfy~\eqref{eq:cind-conj} by Corollary~\ref{cor:0}(i). In the remaining case,
$(d_0,d_2,d_3)=(2,0,0)$,
let $x$ and $y$ be the two isolated vertices of $G[D,\overline D]$. If $x$ and $y$ have a common neighbor in $G[D]$, then Proposition~\ref{lem:-one}(i) again yields
$\cind(G)\geq\frac n2$.
Consequently, for $t=1$, the only configuration not resolved by the present results is the case in which $(d_0,d_2,d_3)=(2,0,0)$ and $x$ and $y$ have no common neighbor in $G[D]$.
In this remaining configuration, $G[\overline D]$ is $2$-regular but has only $\tfrac n2-1$ vertices, while Proposition~\ref{lem:-one} rules out obtaining an induced $2$-regular subgraph of order $\tfrac n2$ from $D$ by deleting a single vertex. Thus, any further argument along this structural approach must combine vertices from both $D$ and $\overline D$. Whether such an induced $2$-regular subgraph necessarily exists remains open.

\medskip

Finally, we record one further observation concerning the induced cycles vertex number. It is independent of the results above, but resolves a conjecture of Henning et al.~\cite{Henning} on the computational complexity of the problem.

Henning, Joos, L\"owenstein, and Sasse~\cite{Henning} considered the decision problem \textsc{Induced Cycle}, which asks, for a given graph $G$ and a positive integer $k$, whether
\[
\cind(G)\geq k.
\]
They proved that \textsc{Induced Cycle} is NP-complete for graphs of maximum degree~$4$ and conjectured that the problem remains NP-complete when restricted to cubic graphs.

The validity of this conjecture is in fact implied by an earlier result of Vattani et al.~\cite{Vattani}; we have verified Lemma 12 in the unpublished extended version of their paper
\footnote{The extended version of the paper, entitled \emph{Preserving Pairwise Relationships in Subgraphs}, is available at \url{https://faculty.mccombs.utexas.edu/deepayan.chakrabarti/mywww/papers/icml11-ppvpreserving.pdf}.},
in which they prove that finding a largest induced $2$-regular subgraph is NP-hard even for cubic graphs. Since $\cind(G)$ is precisely the maximum number of vertices in an induced $2$-regular subgraph of $G$, this yields the NP-hardness of \textsc{Induced Cycle} on cubic graphs.

Moreover, \textsc{Induced Cycle} belongs to NP, since for a given vertex set $U\subseteq V(G)$ one can verify in polynomial time that $|U|\geq k$ and that $G[U]$ is $2$-regular. Consequently, the conjecture of Henning et al.~is resolved.

\begin{corollary}
\label{cor:induced-cycle-NP-complete}
\textsc{Induced Cycle} is NP-complete for cubic graphs.
\end{corollary}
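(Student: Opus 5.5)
The corollary has two parts, membership in NP and NP-hardness on cubic graphs. I would prove them separately.

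\emph{Membership in NP.} The certificate is a vertex set $U\subseteq V(G)$. A verifier checks in polynomial time that $|U|\ge k$. It then computes, for each $u\in U$, the degree $|N_G(u)\cap U|$ and checks that every such degree equals $2$, that is, that $G[U]$ is $2$-regular. This is linear in $|V(G)|+|E(G)|$. So \textsc{Induced Cycle} restricted to cubic graphs lies in NP.

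\emph{NP-hardness.} I would not build a new reduction. Instead I would invoke the result of Vattani et al.~\cite{Vattani}, Lemma 12 of the extended version. It states that finding a largest induced $2$-regular subgraph is NP-hard already for cubic graphs. Its decision version is: given a cubic graph $G$ and an integer $k$, does $G$ contain an induced $2$-regular subgraph on at least $k$ vertices? By the definition of $\cind(G)$ as the maximum order of an induced $2$-regular subgraph, this question is literally ``$\cind(G)\ge k$?''. So the reduction is the identity map on instances.

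The one point needing care is the passage from the optimization hardness to decision hardness, since \cite{Vattani} is phrased for the optimization problem. I would check that their proof is a polynomial reduction from an NP-complete decision problem that produces a cubic graph $G$ and a threshold $k$ with ``yes'' exactly when an induced $2$-regular subgraph of order at least $k$ exists. I expect this to be the case, since such lemmas are normally proved this way. Even if only the optimization form is available, the decision version is polynomially equivalent: a polynomial-time decision algorithm, applied for each $k\le |V(G)|$, would yield $\cind(G)$, and with self-reduction, by deleting vertices one at a time while the value stays the same, it would yield an optimal set.

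The main obstacle is not a mathematical step but checking that the cited lemma applies to simple cubic graphs with no extra structural assumptions. Combining NP membership with this NP-hardness gives NP-completeness, which resolves the conjecture of Henning et al.~\cite{Henning}.
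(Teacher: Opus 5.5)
Your proposal follows the paper's argument: membership in NP by checking a certificate $U$ for $|U|\ge k$ and $2$-regularity of $G[U]$, and NP-hardness by invoking Lemma~12 of the extended version of Vattani et al.\ together with the fact that $\cind(G)$ is by definition the maximum order of an induced $2$-regular subgraph. Your fallback self-reduction is flawed: deleting vertices from a cubic graph leaves a non-cubic graph, so a decision oracle restricted to cubic graphs cannot be queried there, and a Turing-style argument would in any case only give hardness under Cook reductions. This does not matter, because your main route, like the paper, relies on the cited hardness result directly.
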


\vskip 1pc

\noindent{\bf Acknowledgments.}
The second author acknowledges the financial support of the Slovenian Research and Innovation Agency through research core funding No.\ P1-0288. The third author acknowledges the financial support of the Slovenian Research and Innovation Agency through research core funding No.\ P1-0297 and project No.\ J1-70016.
This work has also been supported by the European Commission's Horizon Europe Research and Innovation programme through the Marie Sk\l odowska-Curie Actions Staff Exchanges (MSCA-SE) under Grant Agreement no. 101182819 (COVER: (C)ombinatorial (O)ptimization for (V)ersatile Applications to (E)merging u(R)ban Problems).

\vskip 1pc

\noindent{\bf Statements and Declarations.}
The authors have no conflict of interest to declare.


\end{document}